\newtheorem{theorem}{Theorem}[section]
\newtheorem{lemma}[theorem]{Lemma}
\newtheorem{corollary}[theorem]{Corollary}
\newtheorem{proposition}[theorem]{Proposition}
\theoremstyle{definition}
\newtheorem{definition}[theorem]{Definition}
\newtheorem{example}[theorem]{Example}
\theoremstyle{remark}
\newtheorem{remark}[theorem]{Remark}
\numberwithin{equation}{section}
\newcommand{\mc}{\mathcal}
\newcommand{\spec}{\mathrm{Spec\ }}
\tikzset{
	allign/.style={anchor=north, rotate=90, inner sep=1mm}
}
\address{Onkar Kamlakar Kale, Department of Mathematics, Harish-Chandra Research Institute, Chhatnag Road, Jhunsi, Prayagraj (Allahabad) 211019, India.}
\email{onkars.kale@gmail.com}
\address{Girja S Tripathi, Department of Mathematics, IISER Tirupati, Karakambadi Road, Tirupati 517501, India.}
\email{girja@labs.iisertirupati.ac.in}
\title{constructible Witt theory of Schemes}
\author{Onkar Kamlakar Kale and Girja S Tripathi}
\begin{document}

\bibliographystyle{alpha}

\begin{abstract} 
We study the constructible Witt theory of \'etale sheaves of $\Lambda$-modules on a scheme $X$ for coefficient rings $\Lambda$ having finite characteristic not equal to 2 and prime to the residue characteristics of the scheme $X$. Our construction is based on the recent advances by Cisinski and D\'eglise on six-functor formalism for derived categories of \'etale motives and offers a background for the study of constructible Witt theory as a cohomological invariant for schemes. In the case of smooth complex algebraic varieties and finite coefficient rings, we show that the algebraic constructible Witt theory studied in this paper can be identified with the topological constructible Witt theory.
\end{abstract}
\maketitle

\section{Introduction}
Witt groups of sheaves of modules on topological spaces have been considered as a generalized cohomology theory by Woolf in \cite{woolf2008witt}
and by Woolf and Sch\"{u}rmann in \cite{schurmann2020witt}. These provide signature-type invariants for topological spaces taking values in Witt theories related to the coefficient ring and have descriptions as symmetric forms on intersection cohomology complexes.
This work grew out of our interest in the analogous algebraic setting particularly with an interest in algebraic counterpart for the interpretation of $L$-classes as stable homology operations from (topological) constructible Witt groups to the ordinary rational homology. 
We consider the derived categories of constructible \'etale sheaves of modules over suitable coefficient rings and use Balmer's theory of triangular Witt groups to define constructible Witt groups: Given a scheme $X$ and a ring $\Lambda$ having finite characteristic not equal to 2 and prime to residue characteristics of $X$, we define the constructible Witt groups $W^i_c(X_{\acute{e}t},\Lambda)$ as the Witt groups of triangulated category with duality $(D^b_{ctf} (X_{\acute{e}t},\Lambda), D_X(T))$ consisting of \'etale sheaves of $\Lambda$-modules having finite Tor-dimension and constructible cohomology sheaves. 

The identification in section \ref{constructible Witt theory of fields}, of constructible Witt theory $W^i_c((\spec \mathbb{R})_{\acute{e}t},\Lambda)$ of the field of real numbers with a $\mathbb{Z}/2\mathbb{Z}$-equivariant Witt theory of finitely generated projective $\Lambda$-modules is an incentive to study constructible Witt theory.
We have shown an isomorphism 
$W^i_c((\spec\mathbb{R})_{\acute{e}t}, \Lambda)=W^i_{lf}(\Lambda[\mathbb{Z}/2\mathbb{Z}]),$
where $W^i_{lf}(\Lambda[\mathbb{Z}/2\mathbb{Z}])$ denotes a $\mathbb{Z}/2\mathbb{Z}$-equivariant Witt theory of finitely generated projective $\Lambda$-modules.

For a smooth complex algebraic variety $X$ and finite coefficient ring $\Lambda$, we can identify the algebraically defined constructible Witt theory of $X$ with the constructible Witt theory of the underlying complex manifold $X^{an}$ of the complex points of $X$ in Euclidean topology, defined topologically in \cite{woolf2008witt}. The triangulated categories with duality used in the two constructions are equivalent and induce an isomorphism of the constructible Witt theory of $X$ with the (topological) constructible Witt theory of $X^{an}$.

Using proper pushforwards in the constructible Witt theory, we have defined an algebraic analog of signature considered in \cite{woolf2008witt}; see the Theorem \ref{algebraic analog of signature}. For instance, for a projective real algebraic variety $X$ with the structure morphism $f:X\rightarrow \spec \mathbb{R}$ and $\Lambda$ a ring of finite characteristic not equal to 2, we have the induced proper pushforward for constructible Witt theory 
\[W^{i}(f_*): W^i_c(X_{\acute{e}t}, \Lambda)\rightarrow W^i_c((\spec\mathbb{R})_{\acute{e}t}, \Lambda)=W^i_{lf}(\Lambda[\mathbb{Z}/2\mathbb{Z}]).\]

It can be observed that the theory of \'etale sheaves on $\spec \mathbb{R}$ is the only one that can be related to an equivariant theory frequently studied, namely, the $\mathbb{Z}/2\mathbb{Z}$-equivariant theory since this is the only case in which the absolute Galois group is a familiar one (the cyclic group of order 2). In this paper we identify the constructible Witt theory of sheaves of $\Lambda$-modules for $\spec \mathbb{R}$ with the Witt theory of the group ring $\Lambda[\mathbb{Z}/2\mathbb{Z}]$.
In general, over a field $k$ having characteristic prime to the characteristic of the ring $\Lambda$ the groups $W^i_c((\spec k)_{\acute{e}t},\Lambda)$ should be related to a Witt theory of finitely generated projective $\Lambda$-modules with an action of the absolute Galois group $Gal(k^{sep}/k)$ (a profinite group) of the separable closure of $k$. Even with widening interest in equivariant theories we are not aware of any studies in Witt theory for actions of pro-finite groups and the work in this paper should encourage more research on this.

Now we describe the contents of the paper in details. In section \ref{preliminaries} we quickly recall the Witt theory of triangulated categories with duality. In section \ref{Derived categories of constructible sheaves with tor-finite cohomology} the derived category $D^b_{ctf} (X_{\acute{e}t},\Lambda)$ of \'etale sheaves of  $\Lambda$-modules having finite Tor-dimensions and constructible cohomology sheaves is described as a triangulated category with duality. Under appropriate restrictions these categories have been widely studied for a long time in \'etale cohomology theory. In this paper we use the recent advances made  by Cisinski and D\'eglise on  \'etale motives to understand the category $D^b_{ctf} (X_{\acute{e}t},\Lambda)$ by using an equivalent model $DM_{h,lc}(X,\Lambda)$ of locally constructible $h$-motives. The categories $DM_{h,lc}(X,\Lambda)$ offer a greater generality for the six-functor formalism and also a roadmap for studying the relations between constructible Witt groups and the rational motivic cohomology. In section \ref{Derived categories of constructible sheaves with tor-finite cohomology} we use the work of Cisinski and D\'eglise to recast the relevant results for the bounded derived category $D^b_{ctf} ((-)_{\acute{e}t},\Lambda)$ and using these in section \ref{constructible Witt theory} we develop the constructible Witt theory.

\begin{theorem}
Let $B$ be an excellent noetherian scheme of dimension $\leq 2$ and $\Lambda$ a noetherian  ring of positive characteristic prime to the residue characteristics of $B$. Let $\phi:S\rightarrow B$ be a regular separated finite type $B$-scheme and $f:X\rightarrow S$ a separated morphism of finite type. The categories $D^b_{ctf}(X_{\acute{e}t},\Lambda)\subset D^b(X_{\acute{e}t},\Lambda)$ are closed under the six-functors of Grothendieck in $D^b(X_{\acute{e}t},\Lambda)$. 
For a $\otimes$-invertible object $T\in D^b_{ctf}(S_{\acute{e}t},\Lambda)$ the functor 
$$D_X(T)=R\mathcal{H}om (-, f^!(T)): D^b_{ctf}(X,\Lambda)^{op}\rightarrow D^b_{ctf}(X,\Lambda)$$
is a duality functor on $D^b_{ctf}(X_{\acute{e}t},\Lambda)$. In particular, we have triangulated categories with duality $(D^b_{ctf}(B_{\acute{e}t},\Lambda), R\mathcal{H}om(-, \Lambda))$ and
$(D^b_{ctf}(S_{\acute{e}t},\Lambda), R\mathcal{H}om(-, \Lambda))$
\end{theorem}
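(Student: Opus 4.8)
The plan is to transport the statement through the equivalence $D^b_{ctf}(X_{\acute{e}t},\Lambda)\simeq DM_{h,lc}(X,\Lambda)$ of Section~\ref{Derived categories of constructible sheaves with tor-finite cohomology}, where the six-functor formalism of Cisinski and D\'eglise for locally constructible $h$-motives is in force, and then to check the axioms of Balmer recalled in Section~\ref{preliminaries} for a triangulated category with duality. The standing hypotheses ($B$ excellent noetherian of dimension $\leq 2$, $\Lambda$ noetherian of positive characteristic prime to the residue characteristics of $B$, $\phi\colon S\to B$ regular separated of finite type, $f\colon X\to S$ separated of finite type) are precisely those under which the pertinent finiteness and absolute purity theorems hold; these have already been recast for $D^b_{ctf}((-)_{\acute{e}t},\Lambda)$ in Section~\ref{Derived categories of constructible sheaves with tor-finite cohomology}, so the argument below is mostly an organisation of those inputs.

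First I would settle stability under the six operations. For a separated morphism of finite type the functors $f^*$ and $f_!$ preserve local constructibility with no hypothesis on the base, and $\otimes$ and $R\mathcal{H}om$ of locally constructible objects are again locally constructible; for $f_*$ and $f^!$ one invokes the finiteness theorems valid in our situation. Since, in addition, all six operations preserve boundedness and finite Tor-dimension here, they restrict to functors between the subcategories $D^b_{ctf}((-)_{\acute{e}t},\Lambda)$, and transporting back along the equivalence proves the first assertion. In particular $f^!(T)\in D^b_{ctf}(X_{\acute{e}t},\Lambda)$ for $T\in D^b_{ctf}(S_{\acute{e}t},\Lambda)$, so $D_X(T)=R\mathcal{H}om(-,f^!(T))$ does restrict to a contravariant functor $D^b_{ctf}(X_{\acute{e}t},\Lambda)^{op}\to D^b_{ctf}(X_{\acute{e}t},\Lambda)$, which is $\delta$-exact (triangulated with the usual sign for an internal-Hom duality) by the formal properties of $R\mathcal{H}om$.

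The heart of the matter is biduality: writing $\mathrm{ev}\colon \mathrm{id}\to D_X(T)\circ D_X(T)^{op}$ for the evaluation natural transformation adjoint to the pairing $M\otimes R\mathcal{H}om(M,f^!T)\to f^!T$, one must show $\mathrm{ev}_M$ is an isomorphism for every constructible $M$. I would first reduce to the case $T=\Lambda_S$: since $T$ is $\otimes$-invertible, so is $f^*T$ on $X$, and the projection formula for $f^!$ gives $f^!(T)\simeq f^!(\Lambda_S)\otimes f^*(T)$; tensoring with the invertible object $f^*T$ then intertwines $D_X(\Lambda_S)$ with $D_X(T)$, so biduality for one yields it for the other. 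It thus remains to prove that $f^!(\Lambda_S)$ is a dualizing object on $X$. Because $S$ is regular and $\Lambda$ has positive characteristic prime to the residue characteristics, $\Lambda_S$ is dualizing on $S$ by absolute purity, and $f^!$ of a dualizing object along a separated finite-type morphism is again dualizing; this is the biduality theorem (Poincar\'e--Verdier duality) in the constructible \'etale / $h$-motivic setting recalled in Section~\ref{Derived categories of constructible sheaves with tor-finite cohomology}. I expect this to be the main obstacle: the underlying argument proceeds by noetherian induction and d\'evissage, reducing to the absolute purity isomorphism and to the compatibilities of $f^!$ with composition and with base change.

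Granting biduality, the last requirement, the Balmer compatibility $D_X(T)(\mathrm{ev}_M)\circ \mathrm{ev}_{D_X(T)(M)}=\mathrm{id}_{D_X(T)(M)}$, is formal: it follows from the triangle identities of the $\otimes$--$R\mathcal{H}om$ adjunction in a closed symmetric monoidal triangulated category together with the construction of $\mathrm{ev}$. This exhibits $(D^b_{ctf}(X_{\acute{e}t},\Lambda),D_X(T))$ as a triangulated category with duality, and the concluding assertion is the special case $X=S=B$, respectively $X=S$, with $f=\mathrm{id}$ and $T=\Lambda$, where $f^!(\Lambda)=\Lambda$ and $D_X(T)=R\mathcal{H}om(-,\Lambda)$.
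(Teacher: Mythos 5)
Your proposal is correct and takes essentially the same route as the paper: transport the statement along the equivalence $D^b_{ctf}(X_{\acute{e}t},\Lambda)\simeq DM_{h,lc}(X,\Lambda)$ and invoke the finiteness and duality results of Cisinski--D\'eglise \cite{cisinski2016etale} in the locally constructible $h$-motivic setting. The only difference is one of granularity: where the paper quotes \cite[Theorem 6.3.11, Corollary 6.3.15]{cisinski2016etale} as a black box together with the compatibility of the equivalence with the six operations, you additionally sketch the inner workings of that duality theorem (reduction to $T=\Lambda_S$ by $\otimes$-invertibility, absolute purity over the regular base $S$, d\'evissage, and the formal verification of Balmer's condition on $\omega$), which is consistent with, but not required by, the paper's argument.
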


\begin{theorem}[Pullbacks for \'etale morphisms and proper pushforwards] Under  the same assumptions on $B$, $\Lambda$ and $S$ as in the theorem above, let $f: X\rightarrow S$ and $g: Y\rightarrow S$ be separated $S$-schemes of finite type, and $T$ be a $\otimes$-invertible object in $D^b_{ctf}(S_{\acute{e}t},\Lambda)$. Then an \'etale morphism $h:X\rightarrow Y$ of $S$-schemes induces a morphism of triangulated categories with duality 
\[h^*: (D^b_{ctf}(Y_{\acute{e}t},\Lambda), g^!T) \rightarrow 
(D^b_{ctf}(X_{\acute{e}t},\Lambda), f^!T)\]
and homomorphisms 
$h^*:W^i_c(Y_{\acute{e}t}, \Lambda)\rightarrow W^i_c(X_{\acute{e}t},\Lambda)$
of constructible Witt groups. If the morphism $h:X\rightarrow Y$ is proper, then the pushforward morphism 
\[h_*: (D^b_{ctf}(X_{\acute{e}t},\Lambda), f^!T) \rightarrow 
(D^b_{ctf}(Y_{\acute{e}t},\Lambda), g^!T)\]
is a morphism of triangulated categories with duality and induces 
pushforward morphisms on constructible Witt groups $h_*:W^i_c(X_{\acute{e}t}, \Lambda)\rightarrow W^i_c(Y_{\acute{e}t},\Lambda)$.
\end{theorem}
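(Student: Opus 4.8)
The plan is to exhibit $h^*$ and (when $h$ is proper) $h_*=h_!$ as duality-preserving triangulated functors in Balmer's sense and then invoke the functoriality of triangular Witt groups recalled in Section~\ref{preliminaries}: a triangulated functor $F\colon\mathcal K\to\mathcal K'$ between triangulated categories with duality $(\mathcal K,D)$ and $(\mathcal K',D')$, equipped with a natural isomorphism $\varphi_M\colon F(DM)\iso D'(FM)$ compatible with the bidual isomorphisms, induces homomorphisms $W^i(\mathcal K,D)\to W^i(\mathcal K',D')$ for every $i$. That $h^*$, and $h_!=h_*$ in the proper case, carry $D^b_{ctf}$ into $D^b_{ctf}$ (with the stated dualities even making sense, i.e.\ $f^!T,\,g^!T\in D^b_{ctf}$) is already contained in the previous theorem on stability of $D^b_{ctf}$ under the six operations, and these functors are triangulated; so the content to be supplied is the natural isomorphism $\varphi$ intertwining the dualities $g^!T$ and $f^!T$, together with its coherence.

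For the pullback I would argue as follows. Since $h$ is a morphism of $S$-schemes, $f=g\circ h$, whence $f^!\simeq h^!\circ g^!$ by functoriality of the exceptional inverse image. Since $h$ is \'etale (relative dimension $0$), relative purity supplies a canonical isomorphism $h^!\simeq h^*$ with no Tate twist and no shift. Finally $h^*$ commutes with internal Hom on $D^b_{ctf}$, i.e.\ the canonical map $h^*R\mathcal{H}om(-,-)\to R\mathcal{H}om(h^*-,h^*-)$ is an isomorphism — either because $h$ is \'etale, or because every object of $D^b_{ctf}$ is strongly dualizable and $h^*$ is monoidal and preserves strong duals. Composing these three isomorphisms gives, for $M\in D^b_{ctf}(Y_{\acute{e}t},\Lambda)$,
\[ h^*R\mathcal{H}om(M,g^!T)\iso R\mathcal{H}om(h^*M,h^*g^!T)\iso R\mathcal{H}om(h^*M,h^!g^!T)=R\mathcal{H}om(h^*M,f^!T), \]
that is, a natural isomorphism $h^*\circ D_Y(g^!T)\iso D_X(f^!T)\circ h^*$, which is the sought-for $\varphi$.

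For the proper pushforward, properness gives the canonical identification $h_*\simeq h_!$, and the relevant input is the relative duality isomorphism of the six-operation formalism for $h$, namely the natural isomorphism $h_!R\mathcal{H}om(M,h^!N)\iso R\mathcal{H}om(h_!M,N)$. Taking $N=g^!T$ and using $h^!g^!\simeq f^!$ once more, this becomes, for $M\in D^b_{ctf}(X_{\acute{e}t},\Lambda)$,
\[ h_*R\mathcal{H}om(M,f^!T)\iso R\mathcal{H}om(h_*M,g^!T), \]
i.e.\ a natural isomorphism $h_*\circ D_X(f^!T)\iso D_Y(g^!T)\circ h_*$, the $\varphi$ needed in this case.

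The remaining point — and the one I expect to be the only genuine work — is to check that these $\varphi$'s are compatible with the bidual isomorphisms, so that $h^*$ and $h_*$ are honestly duality-preserving, with sign $+1$ (both commute with the translation in the evident way), whence the induced maps are degree-preserving, $W^i\to W^i$. Since $\varphi$ is assembled purely out of structural isomorphisms of the six-operation package — functoriality $f^!\simeq h^!g^!$, \'etale purity $h^!\simeq h^*$, the compatibility of $h^*$ with internal Hom, and relative duality — and since the bidual isomorphism for $D_X(f^!T)$ established in the previous theorem is itself built from the same structural isomorphisms (ultimately by descending to the base $S$, where $T$ is $\otimes$-invertible), the coherence condition reduces to standard diagram chases internal to the Cisinski--D\'eglise formalism recalled in Section~\ref{Derived categories of constructible sheaves with tor-finite cohomology}. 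Once this is in place, Balmer's functoriality of Witt groups yields the homomorphisms $h^*\colon W^i_c(Y_{\acute{e}t},\Lambda)\to W^i_c(X_{\acute{e}t},\Lambda)$ and, in the proper case, $h_*\colon W^i_c(X_{\acute{e}t},\Lambda)\to W^i_c(Y_{\acute{e}t},\Lambda)$, completing the proof.
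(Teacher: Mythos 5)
Your proposal is correct and follows essentially the same route as the paper: the paper's proof of Proposition~\ref{derived category pullbacks and pushforwards} establishes the proper pushforward case via exactly your chain of identifications ($f^!\simeq h^!g^!$, the relative duality isomorphism $h_!R\mathcal{H}om(-,h^!(-))\simeq R\mathcal{H}om(h_!(-),-)$, and $h_!=h_*$ for $h$ proper), and then the Witt group homomorphisms follow from Balmer's functoriality as in Section~\ref{functoriality of dbctf}. You in fact supply slightly more than the paper does, since the \'etale pullback case (purity $h^!\simeq h^*$ plus compatibility of $h^*$ with internal Hom) is left there as an exercise, and you explicitly flag the coherence of the duality-compatibility isomorphism with the bidual, which the paper does not spell out.
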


In the section \ref{constructible Witt theory of fields} we provide the description of constructible Witt theory of the field $\mathbb{R}$ of real numbers in terms of an equivariant Witt theory of finitely generated $\Lambda$-modules with Galois action. 

\begin{theorem} For a ring $\Lambda$ of finite characteristic not equal to $2$  the bounded derived category $D^b_{ctf}((\spec \mathbb{R})_{\acute{e}t}, \Lambda)$ is equivalent to the bounded derived category $D^b(Proj(\Lambda[\mathbb{Z}/2\mathbb{Z}]))$ of finitely generated projective $\Lambda$-modules with an action of the absolute Galois group $Gal(\mathbb{C}/\mathbb{R})=\mathbb{Z}/2\mathbb{Z}$. There is an identification of constructible Witt groups $W^i_c((\spec\mathbb{R})_{\acute{e}t}, \Lambda)$ with $\mathbb{Z}/2\mathbb{Z}$-equivariant Witt groups of finitely generated projective $\Lambda$-modules.
\end{theorem}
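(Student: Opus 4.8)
The plan is to reduce the statement to the classical description of the small \'etale topos of $\spec\mathbb{R}$. First I would recall that sending an \'etale sheaf $\mathcal{F}$ of $\Lambda$-modules on $\spec\mathbb{R}$ to the $\Lambda$-module $\mathcal{F}(\spec\mathbb{C})$ together with its $Gal(\mathbb{C}/\mathbb{R})$-action defines an equivalence of the category of \'etale sheaves of $\Lambda$-modules on $\spec\mathbb{R}$ with the category of $\Lambda[\mathbb{Z}/2\mathbb{Z}]$-modules (continuity of the action is automatic since the Galois group is finite). This equivalence is exact and monoidal: the tensor product of sheaves goes to $\otimes_\Lambda$ with the diagonal action, the internal $\mathcal{H}om$ goes to $\mathrm{Hom}_\Lambda$ with the conjugation action, and the unit, i.e.\ the constant sheaf $\Lambda$, goes to $\Lambda$ with the trivial action. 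Passing to bounded derived categories therefore yields a triangulated monoidal equivalence between $D^b((\spec\mathbb{R})_{\acute{e}t},\Lambda)$ and the bounded derived category of $\Lambda[\mathbb{Z}/2\mathbb{Z}]$-modules, compatible with $\otimes^L$ and $R\mathcal{H}om$.

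Next I would identify the image of $D^b_{ctf}$. A constructible \'etale sheaf on $\spec\mathbb{R}$ corresponds to a finitely generated $\Lambda[\mathbb{Z}/2\mathbb{Z}]$-module (the group ring is noetherian, being finite over the noetherian ring $\Lambda$), so the objects of $D^b_{ctf}$ become bounded complexes of $\Lambda[\mathbb{Z}/2\mathbb{Z}]$-modules with finitely generated cohomology; the additional condition of finite Tor-dimension over $\Lambda$ says exactly that such a complex is perfect over $\Lambda$. Here the characteristic hypothesis intervenes: since $2$ is invertible in $\Lambda$, Maschke's theorem makes $\Lambda[\mathbb{Z}/2\mathbb{Z}]$ a separable $\Lambda$-algebra, so a complex of $\Lambda[\mathbb{Z}/2\mathbb{Z}]$-modules is perfect over $\Lambda$ if and only if it is perfect over $\Lambda[\mathbb{Z}/2\mathbb{Z}]$. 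Concretely, a $\Lambda[\mathbb{Z}/2\mathbb{Z}]$-module which is finitely generated projective over $\Lambda$ is, via the averaging idempotent, a $\Lambda[\mathbb{Z}/2\mathbb{Z}]$-linear direct summand of $\Lambda[\mathbb{Z}/2\mathbb{Z}]\otimes_\Lambda(-)$, hence finitely generated projective over $\Lambda[\mathbb{Z}/2\mathbb{Z}]$; the same argument shows that finitely generated projective $\Lambda[\mathbb{Z}/2\mathbb{Z}]$-modules coincide with finitely generated projective $\Lambda$-modules equipped with a $\mathbb{Z}/2\mathbb{Z}$-action. It follows that $D^b_{ctf}((\spec\mathbb{R})_{\acute{e}t},\Lambda)$ is identified with the category of perfect complexes over $\Lambda[\mathbb{Z}/2\mathbb{Z}]$, i.e.\ with $D^b(Proj(\Lambda[\mathbb{Z}/2\mathbb{Z}]))$, since every bounded complex of finitely generated projectives is already $K$-projective.

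It then remains to transport the duality and apply functoriality of Balmer's Witt groups. By the first theorem above, applied with $X=B=\spec\mathbb{R}$ (so $f=\mathrm{id}$ and $f^!\Lambda=\Lambda$), the relevant duality is $R\mathcal{H}om(-,\Lambda)$. Every object of $D^b_{ctf}((\spec\mathbb{R})_{\acute{e}t},\Lambda)$ is now represented by a bounded complex whose terms are projective over $\Lambda[\mathbb{Z}/2\mathbb{Z}]$, in particular over $\Lambda$, so this derived internal Hom is computed termwise, and under the equivalence it becomes the functor $M\mapsto\mathrm{Hom}_\Lambda(M,\Lambda)$ with the contragredient action, which is the standard $\mathbb{Z}/2\mathbb{Z}$-equivariant duality on finitely generated projective $\Lambda$-modules. (As $\Lambda[\mathbb{Z}/2\mathbb{Z}]$ is a symmetric $\Lambda$-algebra, this functor is also naturally isomorphic to $\mathrm{Hom}_{\Lambda[\mathbb{Z}/2\mathbb{Z}]}(-,\Lambda[\mathbb{Z}/2\mathbb{Z}])$, so the two standard conventions for the right-hand side agree.) Checking that the canonical double-duality isomorphisms and the compatibilities with the shift correspond to one another --- which is formal, given the monoidality of the equivalence --- shows that we obtain an equivalence of \emph{triangulated categories with duality}, and Balmer's invariance of Witt groups under duality-preserving equivalences yields $W^i_c((\spec\mathbb{R})_{\acute{e}t},\Lambda)\cong W^i_{lf}(\Lambda[\mathbb{Z}/2\mathbb{Z}])$.

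The separability identification $D^b_{ctf}((\spec\mathbb{R})_{\acute{e}t},\Lambda)\simeq D^b(Proj(\Lambda[\mathbb{Z}/2\mathbb{Z}]))$ is the essential input, but once the \'etale topos has been replaced by $B\mathbb{Z}/2\mathbb{Z}$ it is a short Maschke-type computation. The main obstacle I anticipate is the bookkeeping in the final step: verifying that the duality functors, the canonical bidual isomorphisms, and the sign and shift data all correspond under the equivalence, so that the equivalence is one of triangulated categories with duality and not merely of triangulated categories --- this is precisely what guarantees that ``constructible Witt theory of $\spec\mathbb{R}$'' and ``$\mathbb{Z}/2\mathbb{Z}$-equivariant Witt theory of $\Lambda$'' name isomorphic objects.
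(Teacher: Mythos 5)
Your proposal is correct and follows essentially the same route as the paper: the Galois-module description of $Shv((\spec\mathbb{R})_{\acute{e}t},\Lambda)$, the identification of flat constructible sheaves with finitely generated projective $\Lambda[\mathbb{Z}/2\mathbb{Z}]$-modules using invertibility of $2$ (your Maschke/averaging-idempotent argument is the same averaging used in the paper's equivariant-splitting lemmas), and then transport of the duality $R\mathcal{H}om(-,\Lambda)$ followed by Balmer functoriality of Witt groups. Your extra care about how the duality is expressed on the module side (termwise $\mathrm{Hom}_\Lambda(-,\Lambda)$ with the contragredient action, identified with the group-ring duality via the symmetric-algebra structure) is a welcome refinement of what the paper states more briefly, but it is not a different proof.
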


In section \ref{section 6} we explore the relationship between topological and algebraic constructible Witt theories for smooth complex algebraic varieties. In Theorem \ref{Top-Alg} we prove the following.

\begin{theorem}
Let $X$ be a smooth algebraic variety over $\mathbb{C}$ and $\Lambda$ be a finite ring with characteristic not equal to $2$. Then we have an equivalence of triangulated categories with duality
$$(D^b_{ctf}(X_{\acute{e}t},\Lambda), R\mathcal{H}om(-,\Lambda) \xrightarrow{\simeq} (D^b_c(X^{an},\Lambda),R\mathcal{H}om(-,\Lambda))$$
and isomorphism
\[W^i_c(X_{\acute{e}t}, \Lambda)\xrightarrow{\ \sim \ } W^i_c(X^{an}, \Lambda)\]
of the constructible Witt theory of the scheme $X$ with the constructible Witt theory of the topological space $X^{an}$ defined in 
\cite{woolf2008witt}.
\end{theorem}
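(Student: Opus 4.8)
The plan is to deduce the statement from the classical comparison between the \'etale and the classical topology---Artin's comparison theorem for torsion coefficients---upgraded to an equivalence of triangulated categories with duality, and then to invoke the functoriality of Balmer's Witt groups. First I would recall that analytification defines a morphism of topoi $\epsilon\colon X^{an}\to X_{\acute{e}t}$ (it sends an \'etale $U\to X$ to the local homeomorphism $U^{an}\to X^{an}$), hence an exact symmetric monoidal pullback $\epsilon^*\colon D(X_{\acute{e}t},\Lambda)\to D(X^{an},\Lambda)$. Since $\Lambda$ is finite---so torsion of order invertible on the $\mathbb{C}$-scheme $X$---and $X$ is of finite type over $\mathbb{C}$, the Artin comparison theorem in its bounded constructible form (see SGA 4, Exp.~XI and XVII) gives that $\epsilon^*$ restricts to an equivalence $D^b_c(X_{\acute{e}t},\Lambda)\xrightarrow{\ \sim\ }D^b_c(X^{an},\Lambda)$ onto the constructible derived category used in \cite{woolf2008witt}; here the hypothesis that $\Lambda$ is finite is what forces the monodromy along strata to be finite, so that the relevant topologically locally constant sheaves are algebraic. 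As $\epsilon^*$ is compatible with stalks (geometric \'etale stalks specialise to analytic stalks), it matches the finite-Tor-dimension condition and so restricts to an exact equivalence $D^b_{ctf}(X_{\acute{e}t},\Lambda)\xrightarrow{\ \sim\ }D^b_c(X^{an},\Lambda)$; this is compatible with the model $DM_{h,lc}(X,\Lambda)$ used elsewhere in the paper.

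Next I would promote this to an equivalence of triangulated categories with duality; here the smoothness of $X$ enters, via $X^{an}$ being a complex manifold, to guarantee that $R\mathcal{H}om(-,\Lambda)$ is a genuine duality (a shift of Verdier duality) on each side. Being symmetric monoidal, $\epsilon^*$ carries the unit $\Lambda$ to $\Lambda$, and the monoidal structure supplies a natural transformation $\epsilon^*R\mathcal{H}om_{X_{\acute{e}t}}(A,B)\to R\mathcal{H}om_{X^{an}}(\epsilon^*A,\epsilon^*B)$. I would check it is an isomorphism for constructible $A$ by d\'evissage to the generators $A=j_!\Lambda_U$ with $j\colon U\to X$ \'etale, where both sides compute $Rj_*j^*(-)$ and one uses that $\epsilon^*$ commutes with $Rj_*$ for such $j$ (again part of the comparison package). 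Taking $B=\Lambda$ then yields a natural isomorphism $\eta_A\colon\epsilon^*R\mathcal{H}om_{X_{\acute{e}t}}(A,\Lambda)\xrightarrow{\ \sim\ }R\mathcal{H}om_{X^{an}}(\epsilon^*A,\Lambda)$, and a diagram chase using the naturality of $\eta$ and the monoidality of $\epsilon^*$ shows that $\eta$ is compatible with the double-duality isomorphisms of the two categories with duality. Hence $(\epsilon^*,\eta)$ is a duality-preserving exact functor, and by the previous step an equivalence of triangulated categories with duality
$$(D^b_{ctf}(X_{\acute{e}t},\Lambda),R\mathcal{H}om(-,\Lambda))\xrightarrow{\ \sim\ }(D^b_c(X^{an},\Lambda),R\mathcal{H}om(-,\Lambda)).$$

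Finally I would appeal to the fact that a duality-preserving exact equivalence induces isomorphisms on all shifted Witt groups $W^i$ in Balmer's sense; applied to the equivalence just constructed, and recalling that the target is precisely the category with duality defining Woolf's constructible Witt groups $W^i_c(X^{an},\Lambda)$, this yields the asserted isomorphism $W^i_c(X_{\acute{e}t},\Lambda)\xrightarrow{\ \sim\ }W^i_c(X^{an},\Lambda)$. I expect the main difficulty to lie in the second step: one has to align carefully the finiteness and constructibility conventions of the algebraic category $D^b_{ctf}$ with the topological category of \cite{woolf2008witt}, and to verify that $\epsilon^*$ intertwines the two constant-coefficient dualities $R\mathcal{H}om(-,\Lambda)$ together with the coherence required of a morphism of categories with duality; the equivalence of the underlying triangulated categories is classical, so the real content is this duality-theoretic bookkeeping.
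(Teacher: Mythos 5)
Your proposal is correct and takes essentially the same route as the paper: the paper likewise uses the analytification morphism of topoi and the Artin/BBD comparison equivalence $D^b_{ctf}(X_{\acute{e}t},\Lambda)\simeq D^b_c(X^{an},\Lambda)$ (with $D^b_{ctf}$ identified with the compact objects of $D(X_{\acute{e}t},\Lambda)$), invokes its compatibility with the six-functor formalism to obtain an equivalence of triangulated categories with duality for $R\mathcal{H}om(-,\Lambda)$, and concludes via the functoriality of Balmer's Witt groups. The only difference is one of detail: where the paper cites the compatibility with the six operations as a black box, you sketch its verification explicitly (d\'evissage to generators $j_!\Lambda_U$ and the coherence with the double-duality isomorphism).
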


In section \ref{signature} Theorem
\ref{algebraic analog of signature}, we provide a signatures 
with values in appropriate Witt groups of finitely generated locally free modules: For projective real algebraic varieties, a signature with values in
$W^i_{lf}(\Lambda[\mathbb{Z}/2\mathbb{Z}])$; and, 
for a projective complex algebraic varieties, a signature with values in 
$W^i_{lf}(\Lambda)$. 
An algebraic cobordism interpretation of these signatures is a work in progress.

{\it Conventions.} In this paper the coefficient ring $\Lambda$ will be assumed to be of finite characteristic not equal to $2$ and all the schemes considered will have the property that their residue characteristics are prime to the characteristic of $\Lambda$. Whenever we consider complex algebraic varieties, we may further restrict $\Lambda$ to be a finite ring.   All the schemes will be assumed to be noetherian and of finite Krull dimension.

{\it Acknowledgements.} We are thankful to K. Arun Kumar for helpful discussions during the preparation of this manuscript.

\section{Witt theory of triangulated categories with duality}
\label{preliminaries}
In this section we recall the basic theory of Witt groups for triangulated categories with duality developed by Balmer in \cite{balmer1999derived}, \cite{balmer2000triangular}, \cite{balmer2001triangular}.

\subsection{Triangulated categories with duality} 
Let $\delta =\pm 1$. For triangulated categories $(K_1,T_1)$ and $(K_2, T_2)$, a contravariant additive functor $F:K_1\rightarrow K_2$ is said to be $\delta$-exact if $T_{2}^{-1}\circ F = F\circ T_{1}$ and if for any exact triangle 
    $$A\xrightarrow{u} B\xrightarrow{v} C\xrightarrow{w}T_1(A)$$
    the following triangle is exact:
    $$F(C) \xrightarrow{F(v)} F(B)\xrightarrow{F(u)} F(A)\xrightarrow{\delta \cdot T_2(F(w))}T_2(F(C))$$

\begin{definition} Let $(K, T)$ be a triangulated category and $\delta =\pm1$. We will always assume that $1/2\in K$. A \emph{$\delta$-duality} is a $\delta$-exact contravariant functor $\#:K\rightarrow K$ such that there exist an isomorphism $\omega : Id\xrightarrow{\simeq} \# \circ \#$ satisfying the conditions: $$\omega_{T(M)} = T(\omega_M) \hspace{0.5cm} \text{and} \hspace{0.5cm} (\omega_M)^{\#} \circ \omega_{M^\#} = Id_{M^{\#}}$$ for any object $M$ of $K$. Then the triple $(K, \:\#,\: \omega )$ is called a $triangulated$ $category$ $with$ $\delta$-$duality$.

In case $\delta=1$, we shall talk about a $duality$ and in case $\delta=-1$, we shall use the word $skew$-$duality$.
    
\end{definition}
\begin{example}
Let  $(K,\:\#,\:\omega)$ be a tringulated category with $\delta$-duality $(\delta=\pm1)$.
\begin{enumerate}
\item Let $n\in \mathbb{Z}$. Then $(K,\:T^{n}\circ \#,\:\omega)$ is again a triangulated category with $((-1)^{n}\cdot \delta)$-duality.
\item Also, $(K,\:\#,\:-\omega)$ is again a triangulated category with $\delta$-duality.
\end{enumerate}
\end{example}

Consider a triangulated category  $(K,\:\#,\:\omega)$ with $\delta$-duality $(\delta=\pm 1)$. A \emph{symmetric space} is a pair $(P,\phi)$ such that $P$ is an object in $K$ and $\phi:P \xrightarrow{\simeq}P^{\#}$  is an isomorphism such that $\phi^{\#} \circ \omega_{P}=\phi$.
A \emph{skew-symmetric} $form$ is a pair $(P,\phi)$ such that $P$ is an object in $K$ and $\phi:P \xrightarrow{\simeq}P^{\#}$  is an isomorphism such that $\phi^{\#} \circ \omega_{P}=-\phi$. A skew-symmetric form in $K$ is a symmetric form in $(K,\#,-\omega)$. Orthogonal sum and isometries are defined as usual.

\begin{remark}Let $(K,\:\#,\:\omega)$ be a triangulated category with $\delta$-duality  $(\delta=\pm 1)$. Then as suggested by the above example, the \emph{translated (or shifted)} structure of triangulated category with $(-\delta)$-duality is 
$$T(K,\:\#,\:\omega)\coloneqq (K,\:T\circ \#,\:(-\delta)\cdot \omega).$$ 
Also, if $(K,\:\#,\:\omega)$ is a triangulated category with duality (i.e. $\delta=+1$) then $$T^n (K,\:\#,\:\omega)=(K,\:T^n \circ \#,\:(-1)^{n(n+1)/2}\cdot \omega)$$ is a triangulated category with $(-1)^n$-duality, for all $n\in \mathbb{Z}$.
\end{remark}

\subsection{Witt groups of triangulated categories with duality}
In his 1937 paper \cite{witt1937theorie}, Ernst Witt introduced a group structure and even a ring
structure on the set of isometry classes of anisotropic quadratic forms, over an
arbitrary field k. This object is now called the Witt group W(k) of k. Since then, Witt’s construction has been generalized and extended in many ways. In this paper we work in the setting of triangulated categories with duality developed by Balmer. 

\begin{definition}
Let $(K,\:\#,\:\omega)$ be a triangulated category with $\delta$-duality (for $\delta=\pm1$) and $1/2\in K$. Let $(P,\phi)$ be a symmetric space. A pair $(L,\alpha)$ where $L$ is an object of $K$ and $\alpha: L\rightarrow P$ is a morphism, is called a \emph{sublagrangian} of $(P,\phi)$ if $\alpha^{\#}\phi \alpha=0$. A triple $(L,\alpha, w)$ is called a \emph{lagrangian} if the following triangle is exact:
    $$T^{-1}(L^\#)\xrightarrow{w}L\xrightarrow{\alpha}P\xrightarrow{\alpha^{\#}\phi}L^\# \hspace{0.5cm}\text{and} \hspace{0.5cm} \text{if $w$ is $\delta$-symmetric, i.e.}\: T^{-1}(w^\#)=\delta \cdot w.$$
\end{definition}
\begin{definition}
Let $(K,\:\#,\:\omega)$ be a triangulated category with $\delta$-duality (for $\delta=\pm1$). A symmetric space $(P,\:\phi)$ is \emph{neutral} or \emph{metabolic} if it possesses a lagrangian.	
\end{definition}
\begin{definition}
	Let $(K,\:\#,\:\omega)$ be a triangulated category with $\delta$-duality (for $\delta=\pm1$). We define the \emph{Witt Monoid} of $K$ to be the monoid of isometry classes of symmetric spaces endowed with the diagonal sum and we denote it by $$MW(K,\:\#,\:\omega).$$
The set of isometry classes of neutral spaces forms a submonoid $NW(K,\:\#,\:\omega)\subset MW(K,\:\#,\:\omega)$. Then quotient monoid is a group, called the \emph{Witt Group} of $(K,\:\#,\:\omega)$ and written as $W(K,\:\#,\:\omega)$. Thus,
$$W(K,\:\#,\:\omega) = \frac{MW(K,\:\#,\:\omega)}{NW(K,\:\#,\:\omega)}.$$
If $(P,\phi)$ is symmetric space, we write $[P,\phi]$ for its class in the corresponding Witt group. We say that two symmetric spaces are \emph{Witt-equivalent} if their classes in the Witt group are the same. We also define
$$W^n (K,\:\#,\:\omega)\coloneqq W(T^n (K,\:\#,\:\omega))$$
for all $n\in \mathbb{Z}$. These are the $shifted $ Witt groups of $(K,\:\#,\:\omega)$.	
\end{definition}

\subsection{Functoriality}
Let $(K_1, \#_{1}, \omega_{1})$ and $(K_2, \#_{2},  \omega_{2})$ be triangulated categories with duality. Assume that $\#_1$ and $\#_2$ are both either exact or skew exact. A \emph{morphism} of triangulated categories with duality refers to a covariant additive functor $F: K_{1}\rightarrow K_{2}$ that preserves exactness or skew-exactness, and satisfies the following conditions:
$$F\circ \#_1 = \#_2\circ F \hspace{0.5cm}\text{and}\hspace{0.5cm} F(\omega_1)=\omega_2.$$
In this case, if $(P,\phi)$ is a symmetric space for $\#_1$ then $(F(X),F(\phi))$ is a symmetric space for $\#_{2}$. If $(L,\alpha,w)$ is a lagrangian of the starting space, then $(F(L),F(\alpha),\delta \cdot F(w))$ is a lagrangian of its image, where $\delta=\pm 1$ comes from $\delta$-exactness of $F$. This tells that neutral forms are mapped to neutral forms. Hence $F$ induces group homomorphisms
$$W^n(F): W^n(K_1, \#_{1}, \omega_{1}) \rightarrow W^n(K_1, \#_{2}, \omega_{2})$$
for any $n\in \mathbb{Z}$.

\section{Derived category of constructible sheaves}\label{Derived categories of constructible sheaves with tor-finite cohomology}

In this section we recall some well-known results for the full triangulated subcategory 
$D^b_{ctf}(X_{\acute{e}t},\ \Lambda)$ of the derived category  $D(X_{\acute{e}t},\Lambda)$ of \'etale sheaves of 
$\Lambda$-modules on the small \'etale site $X_{\acute{e}t}$. We describe the category $D^b_{ctf}(X_{\acute{e}t},\Lambda)$ as a triangulated category with duality and discuss localization sequences. In the next section applying the general machinery of Balmer the constructible Witt-theory of a scheme $X$ with $\Lambda$-coefficients will be defined and studied via this triangulated category with duality. For the description of duality and localization we use the recent work of Cisinski and D\'eglise in \cite{cisinski2016etale}: The advantage of technical foundations in terms of \'etale motives is a greater generality of the results that otherwise will need more restrictive hypotheses within the framework of \cite{SGA4}, \cite{deligne1977seminaire} and \cite{illusie2014travaux}. For instance, as compared to the restriction of finiteness on coefficient rings one can develop the constructible Witt theory for `good enough' coefficient rings as defined in \cite[Def. 6.3.6]{cisinski2016etale}- this includes $\mathbb{Z}$ and any noetherian ring of positive characteristic. Another generality offered by the work of Cisinski and D\'eglise is extension of the theory to a more general base scheme as recalled in Theorem \ref{6-functor formalism for locally constructible h-motives} in terms of the subcategory $DM_{h,lc}(X,\Lambda)$ of locally constructible $h$-motives of the derived category $DM_{h}(X,\Lambda)$ of $h$-motives.

\subsection{Constructible sheaves of $\Lambda$-modules}
We begin by recalling the category to be used for defining constructible Witt theory- the bounded derived category $D^b_{ctf}(X_{\acute{e}t}, \Lambda)$ of \'etale sheaves having finite Tor-dimension and constructible cohomology sheaves. 
For easy reference we recall the small \'etale site $X_{\acute{e}t}$ of a scheme $X$ and the classically studied bounded derived category 
$D^b_{ctf}(X_{\acute{e}t}, \Lambda)$. As a category it is the category $\acute{E}t/X$ of \'etale $X$-schemes, and covering in the site $X_{\acute{e}t}$ are surjective families  $\{X_i^\prime \xrightarrow{\phi _i} X^\prime\}$ of morphisms in $\acute{E}t/X$.
Thus, 
$$
	\begin{cases}
	\: \: cat(X_{\acute{e}t})= \acute{E}t/X\\  \\  
	\: \: cov(X_{\acute{e}t})= \text{collection of surjective families of morpisms in } \acute{E}t/X. 
	\end{cases}
	$$
	\\
Sheaves on $X_{\acute{e}t}$ are called \emph{\'etale sheaves} on $X$.

Let $X$ be a noetherian scheme and $\Lambda$ a noetherian ring. A sheaf $\mathcal{F}$ of $\Lambda$-modules on $X_{\acute{e}t}$ is called $constructible$ if there exists a decomposition $\bigcup _{i=1} ^{n}X_i$ of $X$ into finitely many locally closed subsets $X_i$ such that each $\mathcal{F}|_{X_i}$ is locally constant and the stalks of $\mathcal{F}$ are finitely generated $\Lambda$-modules.
Along with constructibility the objects in the category $D^b_{ctf}(X_{\acute{e}t},\Lambda)$ have additional finiteness condition, namely that of having finite Tor-dimension.
Let $\mathcal{F}^\bullet $ be a bounded complex of sheaves of $\Lambda$-modules on $X$. We say that $\mathcal{F}^\bullet$ has \emph{finite Tor-dimension} if there exist an integer $n$ such that Tor$_{i}(\mathcal{F}^\bullet, M)=0$ for any $i>n$ and any constant sheaf of $\Lambda$-modules $M$ on $X$.

Let $X$ be a noetherian scheme and $\Lambda$ be a noetherian ring. Denote by $D(X_{\acute{e}t},\Lambda)$ the derived category of sheaves of $\Lambda$-modules over $X_{\acute{e}t}$. We get the full subcategories $D^\ast(X_{\acute{e}t},\Lambda)$ by taking $\ast=+,-,b$.

\begin{definition}[The category $D^b_{ctf}(X_{\acute{e}t},\Lambda)$ of sheaves having finite Tor-dimension and constructible cohomology sheaves]
The category $D^b_{ctf}(X_{\acute{e}t},\Lambda)\subset D(X_{\acute{e}t},\Lambda)$ is the full subcategory 
consisting of bounded complexes of sheaves of $\Lambda$-modules with finite Tor-dimension and having constructible cohomology sheaves. The objects in the category $D^b_{ctf}(X_{\acute{e}t},\Lambda)$ are complexes that are quasi-isomorphic in $D(X_{\acute{e}t}, \Lambda)$  to bounded complexes whose components are \emph{flat} and \emph{constructible} sheaves of $\Lambda$-modules \cite{fu2011etale}.
\end{definition}

\subsection{Duality on constructible sheaves}

As mentioned in the beginning of this section we now use the six-functor formalism for 
triangulated subcategories $DM_{h,lc}(X,\Lambda)$ of locally constructible $h$-motives of the derived category $DM_{h}(X,\Lambda)$ of $h$-motives from \cite{cisinski2016etale}  to the case of classically studied bounded derived categories $D^b_{ctf}(X_{\acute{e}t},\Lambda)$ of \'etale sheaves having finite Tor-dimension and constructible cohomology. For basic definitions and related material about the derived category of \'etale motives the original paper \cite{cisinski2016etale} by Cisinski and D\'eglise can be consulted.

In what follows recall that a scheme is said to be excellent if it has an open affine cover given by excellent rings: Examples of such rings include the ring $\mathbb{Z}$ of integers, fields, the ring $\mathbb{Z}_p$ of $p$-adic integers, finitely generated rings of the form $R[x_1,\cdots,x_r]/<f_1,\cdots,f_n>$ over an excellent ring $R$, and localizations of excellent rings. 

Compared to \cite{SGA4} and \cite{deligne1977seminaire} more general six-functor formalism for $D^b_{ctf}(X_{\acute{e}t},\Lambda)$ follows from the two results \cite[Theorem 6.3.11, Corollary 6.3.15]{cisinski2016etale} restated in the following theorem.

\begin{theorem}
\label{6-functor formalism for locally constructible h-motives}
Let $B$ be an excellent scheme of dimension $\leq 2$ and $\Lambda$ a noetherian  ring of positive characteristic prime to the residue characteristics of $B$. Let $\phi:S\rightarrow B$ be a regular separated finite type $B$-scheme and $f:X\rightarrow S$ a separated morphism of finite type. The categories $D^b_{ctf}(X_{\acute{e}t},\Lambda)\subset D^b(X_{\acute{e}t},\Lambda)$ are closed under the six-functors of Grothendieck in $D^b(X_{\acute{e}t},\Lambda)$. 
For a $\otimes$-invertible object $T\in D^b_{ctf}(S_{\acute{e}t},\Lambda)$ the functor 
$$D_X(T)=R\mathcal{H}om (-, f^!(T)): D^b_{ctf}(X_{\acute{e}t},\Lambda)^{op}\rightarrow D^b_{ctf}(X_{\acute{e}t},\Lambda)$$
is a duality functor on $D^b_{ctf}(X_{\acute{e}t},\Lambda)$. In particular, we have triangulated categories with duality $(D^b_{ctf}(B_{\acute{e}t},\Lambda), R\mathcal{H}om(-, \Lambda))$ and
$(D^b_{ctf}(S_{\acute{e}t},\Lambda), R\mathcal{H}om(-, \Lambda))$.
\end{theorem}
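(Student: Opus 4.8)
The plan is to transport the six-functor formalism and the duality statement of Cisinski--D\'eglise for $h$-motives along the comparison between torsion \'etale sheaves and $h$-motives, and then to verify Balmer's axioms from the definition of $\delta$-duality recalled in Section~\ref{preliminaries}. First I would record the comparison (rigidity) equivalence: since $\Lambda$ is a noetherian ring of positive characteristic prime to the residue characteristics of $B$, it is a ``good'' coefficient ring in the sense of \cite[Def.~6.3.6]{cisinski2016etale}, and hence for every $B$-scheme $Y$ of finite type there is a symmetric monoidal equivalence $\rho_Y \colon DM_h(Y,\Lambda)\xrightarrow{\ \sim\ }D(Y_{\acute{e}t},\Lambda)$ compatible with $f^{*}$, $f_{*}$, $f_{!}$, $f^{!}$, $\otimes$ and internal $R\mathcal{H}om$. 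The preliminary --- and, I expect, the only substantive --- step is to check that $\rho_Y$ restricts to an equivalence $DM_{h,lc}(Y,\Lambda)\xrightarrow{\ \sim\ }D^{b}_{ctf}(Y_{\acute{e}t},\Lambda)$, i.e.\ that locally constructible $h$-motives correspond exactly to bounded complexes that are quasi-isomorphic to bounded complexes of flat constructible sheaves of $\Lambda$-modules; here one uses that the unit object is constructible because $\Lambda$ is good, and one reconciles the finite Tor-dimension condition with boundedness on the motivic side. This is the place where the actual definitions must be compared rather than formally transported, so I expect it to be the main obstacle.

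Granting this, the first assertion is immediate: by \cite[Thm.~6.3.11]{cisinski2016etale}, for the tower $X\xrightarrow{f}S\xrightarrow{\phi}B$ satisfying the stated hypotheses the subcategories $DM_{h,lc}$ are stable under $f^{*}$, $f_{*}$, $f_{!}$, $f^{!}$, $\otimes$ and $R\mathcal{H}om$, so applying the equivalences $\rho_X$, $\rho_S$, $\rho_B$ and their compatibility with these operations shows that $D^{b}_{ctf}(X_{\acute{e}t},\Lambda)\subset D^{b}(X_{\acute{e}t},\Lambda)$ is closed under the six Grothendieck operations.

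For the duality statement I would invoke the duality (absolute-purity) part of \cite[Cor.~6.3.15]{cisinski2016etale}: since $\phi\colon S\to B$ is regular, $S$ is a regular scheme, and hence for any $\otimes$-invertible object $T'$ of $DM_{h,lc}(S,\Lambda)$ the object $f^{!}T'$ is dualizing in $DM_{h,lc}(X,\Lambda)$, meaning $\underline{\mathrm{Hom}}(-,f^{!}T')$ preserves $DM_{h,lc}(X,\Lambda)$ and the canonical biduality transformation $\mathrm{Id}\to\underline{\mathrm{Hom}}(\underline{\mathrm{Hom}}(-,f^{!}T'),f^{!}T')$ is an isomorphism on $DM_{h,lc}(X,\Lambda)$. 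Applying this to $T'=\rho_S^{-1}(T)$ for the given $\otimes$-invertible $T\in D^{b}_{ctf}(S_{\acute{e}t},\Lambda)$ and transporting along $\rho_X$ --- which, by compatibility of $\rho$ with $f^{!}$ and internal $R\mathcal{H}om$, identifies the transported functor with $D_X(T)=R\mathcal{H}om(-,f^{!}T)$ --- shows that $D_X(T)$ preserves $D^{b}_{ctf}(X_{\acute{e}t},\Lambda)$ and that the biduality map $\omega\colon\mathrm{Id}\xrightarrow{\ \sim\ }D_X(T)\circ D_X(T)$ is an isomorphism there. To finish, I would check the remaining axioms for a triangulated category with $\delta$-duality: the standing hypothesis $1/2\in\Lambda$ holds by the assumption on $\mathrm{char}\,\Lambda$; the functor $R\mathcal{H}om(-,K)$ with $K=f^{!}T$ fixed is a contravariant triangulated functor that commutes with the shift up to the usual sign and, with the standard sign conventions on the total internal-$R\mathcal{H}om$ complex, is $\delta$-exact for an appropriate $\delta\in\{\pm1\}$ (the sign bookkeeping being as in Balmer's treatment of the derived duality $R\mathcal{H}om(-,\mathcal{O})$, twisted by the shift built into $T$ exactly as in the Example and Remark of Section~\ref{preliminaries}); and the relations $\omega_{T(M)}=T(\omega_M)$ and $(\omega_M)^{\#}\circ\omega_{M^{\#}}=\mathrm{Id}_{M^{\#}}$ are the standard coherence identities for a dualizing object in a closed symmetric monoidal triangulated category, which hold formally once $\omega$ is built from the evaluation morphisms. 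Thus $(D^{b}_{ctf}(X_{\acute{e}t},\Lambda),D_X(T),\omega)$ is a triangulated category with $\delta$-duality.

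Finally, for the ``in particular'' assertions I would specialize to $X=S=B$, $f=\mathrm{id}$ and $T=\Lambda$: then $\Lambda$ is $\otimes$-invertible, lies in $D^{b}_{ctf}(B_{\acute{e}t},\Lambda)$, and $f^{!}\Lambda=\Lambda$, so the preceding steps give that $(D^{b}_{ctf}(B_{\acute{e}t},\Lambda),R\mathcal{H}om(-,\Lambda))$ is a triangulated category with duality; taking instead $X=S$, $f=\mathrm{id}_S$ and $T=\Lambda$ yields $(D^{b}_{ctf}(S_{\acute{e}t},\Lambda),R\mathcal{H}om(-,\Lambda))$. In summary, the invocations of \cite[Thm.~6.3.11, Cor.~6.3.15]{cisinski2016etale} are straightforward transports and the remaining verifications are formal sign and coherence bookkeeping; the real work is the identification of $D^{b}_{ctf}((-)_{\acute{e}t},\Lambda)$ with $DM_{h,lc}(-,\Lambda)$ together with all six operations.
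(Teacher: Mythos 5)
Your proposal is correct and follows essentially the same route as the paper: transport the Cisinski--D\'eglise results on stability of $DM_{h,lc}(-,\Lambda)$ under the six operations and on the duality $R\mathcal{H}om(-,f^!T)$ along the comparison equivalence with $D^b_{ctf}((-)_{\acute{e}t},\Lambda)$, then read off the ``in particular'' cases with $T=\Lambda$. The only difference is emphasis: what you single out as ``the real work'' --- identifying $D^b_{ctf}((-)_{\acute{e}t},\Lambda)$ with $DM_{h,lc}(-,\Lambda)$ compatibly with the six functors --- is exactly the content of the cited \cite[Theorem 6.3.11]{cisinski2016etale}, which the paper simply invokes, while your extra verification of Balmer's sign and coherence axioms is left implicit there.
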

\begin{proof} Under the assumptions in the theorem the
triangulated subcategory $DM_{h,lc}(X,\Lambda)\subset DM_{h}(X,\Lambda)$ of locally constructible $h$-motives is closed under the six-functors of Grothendieck in $DM_{h}(X,\Lambda)$ and for 
a locally constructible $\otimes$-invertible object $U\in DM_{h}(S,\Lambda)$, the functor 
$$D_X(U)=R\mathcal{H}om (-, f^!(U)): DM_{h,lc}(X,\Lambda)^{op}\rightarrow DM_{h,lc}(X,\Lambda)$$
is a duality functor on $DM_{h,lc}(X,\Lambda)$. Since the equivalence of categories 
\[D^b_{ctf}(X_{\acute{e}t},\Lambda)\xrightarrow{\simeq}  DM_{h,lc}(X,\Lambda) \]
in \cite[Theorem 6.3.11]{cisinski2016etale}
is compatible with the six functor formalism in the two settings, the proof of the theorem is  
complete. 
\end{proof}
\noindent {\it Notation.} With the notations of the above theorem the triangulated category $D^b_{ctf}(X_{\acute{e}t},\Lambda)$ with the duality $D_X(T)$ will be denoted by $(D^b_{ctf}(X_{\acute{e}t},\Lambda), D_X(T))$ or $(D^b_{ctf}(X_{\acute{e}t},\Lambda), f^!T)$.

\begin{proposition}[Pullbacks for \'etale morphisms and pushforwards for proper morphisms]
\label{derived category pullbacks and pushforwards} 
Under  the same assumptions on $B$, $\Lambda$ and $S$ as in the theorem above, let $f: X\rightarrow S$ and $g: Y\rightarrow S$ be separated $S$-schemes of finite type, and $T$ be a $\otimes$-invertible object in $D^b_{ctf}(S_{\acute{e}t},\Lambda)$. Let $h:X\rightarrow Y$ be a morphism of $S$-schemes. If $h$ is \'etale, then the \emph{pullback} morphism $h^*: D^b_{ctf}(Y_{\acute{e}t},\Lambda) \rightarrow 
D^b_{ctf}(X_{\acute{e}t},\Lambda)$ is a morphism of triangulated categories with duality 
\[h^*: (D^b_{ctf}(Y_{\acute{e}t},\Lambda), g^!T) \rightarrow 
(D^b_{ctf}(X_{\acute{e}t},\Lambda), f^!T).\] 
If the morphism $h:X\rightarrow Y$ is proper, then the pushforward morphism $h_*:D^b_{ctf}(X_{\acute{e}t},\Lambda) \rightarrow D^b_{ctf}(Y_{\acute{e}t},\Lambda)$ is a morphism of triangulated categories with duality
\[h_*: (D^b_{ctf}(X_{\acute{e}t},\Lambda), f^!T) \rightarrow 
(D^b_{ctf}(Y_{\acute{e}t},\Lambda), g^!T).\]
\end{proposition}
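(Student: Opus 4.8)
The plan is to deduce this proposition from the six-functor formalism recorded in Theorem \ref{6-functor formalism for locally constructible h-motives}, using the standard base-change and adjunction identities for the functors $h^*$, $h_*$ on $D^b_{ctf}((-)_{\acute{e}t},\Lambda)$. The key point in each case is to produce a natural isomorphism expressing compatibility of the functor with the two duality functors $g^!T$ on $Y$ and $f^!T$ on $X$, since $g\circ h=f$ in both situations. First, for an \'etale morphism $h:X\to Y$, I would recall that $h^*$ is exact (it commutes with the triangulated structure and with shifts), so it remains to verify $h^*\circ D_Y(T)\simeq D_X(T)\circ h^*$ compatibly with the bidualities $\omega$. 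For \'etale $h$ one has $h^*=h^!$ (up to the canonical identification), and the relevant identity is the standard isomorphism $h^*R\mathcal{H}om_Y(M,N)\simeq R\mathcal{H}om_X(h^*M,h^*N)$ together with $h^*g^!T\simeq f^!T$; the latter follows from $f^!=h^!g^!=h^*g^!$ since $h$ is \'etale. One then checks that this chain of isomorphisms is compatible with the canonical morphisms $\mathrm{id}\to D\circ D$ on both sides, so that $h^*$ upgrades to a morphism of triangulated categories with duality, and by the functoriality recalled in Section \ref{preliminaries} it induces $h^*:W^i_c(Y_{\acute{e}t},\Lambda)\to W^i_c(X_{\acute{e}t},\Lambda)$.

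For the proper case, the argument is dual and slightly less formal. For proper $h$ one has $h_*=h_!$, and the absolute purity / adjunction package gives the projection-type formula and, crucially, the natural isomorphism
\[
h_*R\mathcal{H}om_X(M,f^!N)\;\simeq\;R\mathcal{H}om_Y(h_!M,N)
\]
for $N$ in the image of (or more precisely, applied to $N=g^!T$), which is exactly the adjunction $(h_!,h^!)$ combined with $f^!=h^!g^!$. Specializing $N=g^!T$ and using $h_!=h_*$ yields $h_*\circ D_X(f^!T)\simeq D_Y(g^!T)\circ h_*$ on objects; I would then verify that this isomorphism is natural and compatible with the double-duality isomorphisms, so that $(h_*,\text{this iso})$ is a morphism of triangulated categories with duality in the sense of Section \ref{preliminaries}. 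Exactness of $h_*$ on $D^b_{ctf}$ is part of Theorem \ref{6-functor formalism for locally constructible h-motives} (the six functors preserve $D^b_{ctf}$), so no extra finiteness check is needed. Functoriality then gives the pushforward $h_*:W^i_c(X_{\acute{e}t},\Lambda)\to W^i_c(Y_{\acute{e}t},\Lambda)$.

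The main obstacle I anticipate is purely bookkeeping rather than conceptual: verifying that the natural isomorphisms $h^*\circ D_Y\simeq D_X\circ h^*$ and $h_*\circ D_X\simeq D_Y\circ h_*$ are compatible with the bidual isomorphisms $\omega$ — i.e. that they respect the coherence data making $\#$ a $\delta$-duality rather than merely an equivalence of categories. This amounts to chasing a diagram built from the unit/counit of the $(h_!,h^!)$ (resp. $(h^*,h_*)$) adjunctions and the evaluation maps defining $R\mathcal{H}om$; such a chase is standard in the six-functor literature but must be carried out carefully, and one must also fix a sign/shift convention so that the $\delta$-exactness ($\delta=\pm1$) of the functors involved is tracked correctly through the duality. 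A clean way to organize this is to transport everything through the equivalence $D^b_{ctf}(X_{\acute{e}t},\Lambda)\simeq DM_{h,lc}(X,\Lambda)$ of \cite[Theorem 6.3.11]{cisinski2016etale}, where these compatibilities are already part of the established six-functor formalism of Cisinski and D\'eglise, and then observe that the equivalence is monoidal and commutes with $f^!$, hence transports the duality structures and the morphisms between them.
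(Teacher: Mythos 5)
Your proposal is correct and follows essentially the same route as the paper: the proper case is handled by combining $f^!=h^!g^!$ with the duality adjunction $h_*R\mathcal{H}om_X(-,h^!(-))\simeq R\mathcal{H}om_Y(h_!(-),-)$ and the identification $h_!=h_*$ for proper $h$, while the \'etale case uses $h^!=h^*$ and compatibility of $h^*$ with $R\mathcal{H}om$ (the paper works out the proper case and leaves the \'etale case as an exercise). Note only the small slip in your displayed formula, which should read $h_*R\mathcal{H}om_X(M,h^!N)\simeq R\mathcal{H}om_Y(h_!M,N)$ before specializing $N=g^!T$; your surrounding text makes clear this is what you intend.
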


\begin{proof} We explain the details for the pushforwards $h_*$ for proper morphisms of schemes and leave the part on $h^*$ as an exercise. We show that in the following diagram of triangulated categories 
\[
\xymatrix{
D_{ctf}^b(X_{\acute{e}t},\Lambda)^{op} \ar[rr]^{\ \ \ D_X(T)\ \ \ } \ar[d]^{h_*} && D_{ctf}^b(X_{\acute{e}t},\Lambda) \ar[d]^{h_*}\\
D_{ctf}^b(Y_{\acute{e}t},\Lambda)^{op} \ar[rr]^{\ \ \ D_Y(T)\ \ \ } &&D_{ctf}^b(Y_{\acute{e}t},\Lambda)}
\]
the two compositions are naturally isomorphic by using the relations among the functors in this diagram. We have $f=g\circ h$. Given an element $F\in D_{ctf}^b(X_{\acute{e}t},\Lambda)$ we have the following natural identifications 
\[
\xymatrix{
h_*(D_X(T)(F))  = h_*(R\mathcal{H}om (F, f^!(T))) \simeq h_*(R\mathcal{H}om (F, h^!g^!(T)))\\
\simeq R\mathcal{H}om_Y (h_!F, f^!T) \simeq R\mathcal{H}om_Y (h_*F, f^!T)
= D_Y(T)(h_*(F)))
}\]
where in second to the last identification we use the fact that $h_!=h_*$ for the proper morphism $h$.\end{proof}

\subsection{Localization}  We know that for a closed immersion $i:Z\hookrightarrow X$ of schemes the induced exact functor $i_*:Shv(Z_{\acute{e}t},\Lambda)\rightarrow Shv(X_{\acute{e}t},\Lambda)$ is fully faithful and its essential image is the subcategory of those sheaves whose support is contained in $Z$. The natural transformation $Ri_*=i_*:D(Z_{\acute{e}t},\Lambda)\rightarrow D(X_{\acute{e}t},\Lambda)$ of the derived categories induces an equivalence of $D(Z_{\acute{e}t},\Lambda)$ with $D_Z(X_{\acute{e}t},\Lambda)$- the strictly full saturated subcategory of $D(X_{\acute{e}t},\Lambda)$ consisting of complexes whose cohomology sheaves are supported in $Z$. Let $j:U=X-Z\hookrightarrow X$ be the complementary open immersion and $j^*:Shv(X_{\acute{e}t},\Lambda)\rightarrow Shv(U_{\acute{e}t},\Lambda)$ the induced map of sheaves.
Then we have an exact sequence of triangulated categories
\begin{equation}
\label{basic localization sequence}
D(Z_{\acute{e}t},\Lambda)\xrightarrow{i_*} D(X_{\acute{e}t},\Lambda)\xrightarrow{j^*} D(U_{\acute{e}t},\Lambda)\end{equation}
in which the $D(Z_{\acute{e}t},\Lambda)$ can be identified with $D_Z(X_{\acute{e}t},\Lambda)$.
Let $D^b_{ctf,Z}(X_{\acute{e}t},\Lambda)\subset D_{ctf}^b(X_{\acute{e}t},\Lambda)$ be the strictly full saturated triangulated subcategory consisting of bounded complexes having constructible and finite Tor-dimension whose cohomology sheaves are supported in $Z$. Under the assumptions in theorem \ref{6-functor formalism for locally constructible h-motives} for the scheme $X$ and the ring $\Lambda$ the six-functor formalism for \'etale motives in \cite[Corollary 6.3.15]{cisinski2016etale} implies that the natural equivalence of the categories $D(Z_{\acute{e}t},\Lambda)$ with $D_Z(X_{\acute{e}t},\Lambda)$  induces an equivalence of $D_{ctf}^b(Z_{\acute{e}t},\Lambda)$ with $D^b_{ctf,Z}(X_{\acute{e}t},\Lambda)$.

\begin{lemma} 
\label{duality on induced morphisms via closed and open immersions}
With the same assumptions as in Theorem $\mathrm{\ref{6-functor formalism for locally constructible h-motives}}$ for the schemes $B,\ S,\ X$, 
the ring $\Lambda$ and the map $f:X\rightarrow S$, let $i:Z\hookrightarrow X$ be a closed immersion and $j:U=X-Z\hookrightarrow X$ the complementary open immersion. The induced functors 
$i_*:D_{ctf}^b(Z_{\acute{e}t},\Lambda)\rightarrow D_{ctf}^b(X_{\acute{e}t},\Lambda)$ and 
$j^*:D_{ctf}^b(X_{\acute{e}t},\Lambda) \rightarrow D_{ctf}^b(U_{\acute{e}t},\Lambda)$
are natural transformations of triangulated categories with duality
\[
\xymatrix{
(a.)\ \ \ \ \ \ \ \ \ \ \ \ \ i_*:(D_{ctf}^b(Z_{\acute{e}t},\Lambda), D_Z(T)) \rightarrow (D_{ctf}^b(X_{\acute{e}t},\Lambda), D_X(T)) \\
(b.)\ \ \ \ \ \ \ \ \ \ \ \ \ j^*:(D_{ctf}^b(X_{\acute{e}t},\Lambda), D_X(T)) \rightarrow (D_{ctf}^b(U_{\acute{e}t},\Lambda), D_U(T)) 
}\]
for every $\otimes$-invertible object $T\in D^b_{ctf}(S_{\acute{e}t},\Lambda)$.
\end{lemma}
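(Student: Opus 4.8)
The plan is to treat the closed immersion $i$ and the open immersion $j$ separately: part $(a.)$ will be deduced from the proper‐pushforward case already established, and part $(b.)$ will be proved directly from the six‑functor relations.

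For $(a.)$, I would first record that a closed immersion is proper and that $f\circ i\colon Z\to S$ is a separated $S$-scheme of finite type, so Proposition~\ref{derived category pullbacks and pushforwards} applies with the roles of its $X,Y,f,g,h$ played by $Z,X,f\circ i,f,i$. The only preliminary observation is the canonical identification $(f\circ i)^!T\simeq i^!f^!T$, which shows that the duality $D_Z(T)=R\mathcal{H}om_Z(-,(f\circ i)^!T)$ on $D^b_{ctf}(Z_{\acute{e}t},\Lambda)$ is exactly the one occurring in that proposition. Concretely, for $F\in D^b_{ctf}(Z_{\acute{e}t},\Lambda)$ one chains the isomorphisms
\[
i_*\bigl(D_Z(T)(F)\bigr)=i_*R\mathcal{H}om_Z(F,i^!f^!T)\simeq R\mathcal{H}om_X(i_!F,f^!T)=R\mathcal{H}om_X(i_*F,f^!T)=D_X(T)(i_*F),
\]
using the adjunction $R\mathcal{H}om_X(i_!-,-)\simeq i_*R\mathcal{H}om_Z(-,i^!-)$ and the equality $i_!=i_*$ valid because $i$ is proper. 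Naturality of this chain in $F$ yields the commuting square of duality functors; alternatively, one may simply transport the statement through the equivalence $D^b_{ctf}(-{}_{\acute{e}t},\Lambda)\simeq DM_{h,lc}(-,\Lambda)$ as in the proof of Theorem~\ref{6-functor formalism for locally constructible h-motives}.

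For $(b.)$ the first reduction is that $j$ is an open, hence \'etale, immersion, so $j^!\simeq j^*$ and therefore $D_U(T)=R\mathcal{H}om_U(-,(f\circ j)^!T)=R\mathcal{H}om_U(-,j^*f^!T)$. The substantive point is the base change isomorphism
\[
j^*R\mathcal{H}om_X(F,G)\simeq R\mathcal{H}om_U(j^*F,j^*G),
\]
valid for any \'etale $j$; I would prove it by a Yoneda argument, using that $j_!$ is left adjoint to $j^*$ (both preserving $D^b_{ctf}$ by Theorem~\ref{6-functor formalism for locally constructible h-motives}) together with the projection formula $j_!(A\otimes j^*B)\simeq j_!A\otimes B$, to identify the functors $A\mapsto \mathrm{Hom}_U(A,j^*R\mathcal{H}om_X(F,G))$ and $A\mapsto \mathrm{Hom}_U(A,R\mathcal{H}om_U(j^*F,j^*G))$ naturally in $A$. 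Specializing $G=f^!T$ then gives $j^*(D_X(T)(F))\simeq R\mathcal{H}om_U(j^*F,j^*f^!T)=D_U(T)(j^*F)$, again natural in $F$, which is the commuting square required for $(b.)$.

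It remains to check that $i_*$ and $j^*$, which are triangulated functors preserving $D^b_{ctf}$, are compatible with the bidual isomorphisms $\omega$ attached to $D_X(T),D_Z(T),D_U(T)$ by Theorem~\ref{6-functor formalism for locally constructible h-motives}; that is, the natural isomorphisms constructed above must intertwine $i_*(\omega)$ with $\omega$ and $j^*(\omega)$ with $\omega$, in the sense of the Functoriality subsection. This is the one step requiring genuine (though purely formal) care: it amounts to a coherence diagram in the six‑functor formalism of \cite{cisinski2016etale}, which one settles using that the biduality isomorphism is the unique one compatible with evaluation, together with the coherence of the adjunctions, the projection formula and proper base change invoked above. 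I expect this coherence verification to be the main (and essentially only) obstacle; everything else is a direct invocation of Proposition~\ref{derived category pullbacks and pushforwards} and of the standard six‑functor identities.
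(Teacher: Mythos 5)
Your proposal is correct and takes essentially the same route as the paper, whose entire proof is the observation that the lemma is a special case of Proposition~\ref{derived category pullbacks and pushforwards} (a closed immersion being proper, an open immersion being \'etale). The only difference is that you spell out the details the paper leaves implicit --- the $R\mathcal{H}om$--adjunction chain for $i_*$, which mirrors the proposition's own proof, and the \'etale pullback compatibility via $j^!\simeq j^*$ and the projection formula, which that proposition explicitly leaves as an exercise.
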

\begin{proof} This is a special case of Proposition \ref{derived category pullbacks and pushforwards}.
\end{proof}

Under the equivalence of $D_{ctf}^b(Z_{\acute{e}t},\Lambda)$ with $D^b_{ctf,Z}(X_{\acute{e}t},\Lambda)$ we will consider $D^b_{ctf,Z}(X_{\acute{e}t},\Lambda)$ as a triangulated category with duality and denote the duality by $D_Z(T)$ for a $\otimes$-invertible object $T\in D^b_{ctf}(S_{\acute{e}t},\Lambda)$.
The localization property of the categories $D^b_{ctf}((-)_{\acute{e}t},\Lambda)$ is the following exact sequence of triangulated categories with the notations of this subsection.

\begin{theorem}
\label{localization sequence of triangulated categories with duality}
Let $B$ be an excellent scheme of dimension $\leq 2$ and $\Lambda$ a noetherian  ring of positive characteristic prime to the residue characteristics of $B$. Let $\phi:S\rightarrow B$ be a regular separated finite type $B$-scheme and $f:X\rightarrow S$ a separated morphism of finite type. The exact sequence $\mathrm{\ref{basic localization sequence}}$ of triangulated categories induces a localization sequence of triangulated categories with duality
\begin{equation}
\label{basic localization sequence with duality}
(D^b_{ctf,Z}(X_{\acute{e}t},\Lambda), D_Z(T)) \xrightarrow{i_*} (D^b_{ctf}(X_{\acute{e}t},\Lambda), D_X(T))\xrightarrow{j^*} D^b_{ctf}(U_{\acute{e}t},\Lambda), D_U(T))
\end{equation}
for a $\otimes$-invertible object $T\in D^b_{ctf}(S_{\acute{e}t},\Lambda)$.
\end{theorem}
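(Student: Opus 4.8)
The plan is to deduce the statement by restricting the localization sequence \ref{basic localization sequence} of full derived categories to the subcategories of bounded complexes with finite Tor-dimension and constructible cohomology, and then checking that the restricted sequence is compatible with the dualities $D_Z(T)$, $D_X(T)$, $D_U(T)$ in the sense required for Balmer's localization formalism. So I would split the argument into two parts: first, that
\[
D^b_{ctf,Z}(X_{\acute{e}t},\Lambda)\xrightarrow{i_*} D^b_{ctf}(X_{\acute{e}t},\Lambda)\xrightarrow{j^*}D^b_{ctf}(U_{\acute{e}t},\Lambda)
\]
is an exact sequence of triangulated categories (i.e.\ exhibits the third term as the Verdier quotient of the second by the thick subcategory $D^b_{ctf,Z}(X_{\acute{e}t},\Lambda)$); second, that all three carry dualities and that $i_*$ and $j^*$ intertwine them.

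For the first part: full faithfulness of $i_*$ and the identification of $D^b_{ctf,Z}(X_{\acute{e}t},\Lambda)$ with the kernel of $j^*$ inside $D^b_{ctf}(X_{\acute{e}t},\Lambda)$ are immediate from the equivalence $D^b_{ctf}(Z_{\acute{e}t},\Lambda)\simeq D^b_{ctf,Z}(X_{\acute{e}t},\Lambda)$ recalled before the statement and from the very definition of $D^b_{ctf,Z}$. For essential surjectivity of the induced functor $D^b_{ctf}(X_{\acute{e}t},\Lambda)/D^b_{ctf,Z}(X_{\acute{e}t},\Lambda)\to D^b_{ctf}(U_{\acute{e}t},\Lambda)$ I would use the extension-by-zero functor $j_!$: representing an object of $D^b_{ctf}(U_{\acute{e}t},\Lambda)$ by a bounded complex of flat constructible sheaves, $j_!$ is exact and preserves boundedness, flatness and constructibility, so $j_!\mathcal{G}\in D^b_{ctf}(X_{\acute{e}t},\Lambda)$ with $j^*j_!\mathcal{G}\simeq\mathcal{G}$. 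Fullness and faithfulness of the quotient functor then follow by transporting, along the equivalence $D^b_{ctf}((-)_{\acute{e}t},\Lambda)\simeq DM_{h,lc}(-,\Lambda)$ of Theorem \ref{6-functor formalism for locally constructible h-motives}, the localization property for locally constructible $h$-motives built into the six-functor formalism of \cite{cisinski2016etale} (the restriction to constructible objects of \cite[Corollary 6.3.15]{cisinski2016etale}, which was already invoked above to get the equivalence $D^b_{ctf}(Z_{\acute{e}t},\Lambda)\simeq D^b_{ctf,Z}(X_{\acute{e}t},\Lambda)$).

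For the second part: $D^b_{ctf}(X_{\acute{e}t},\Lambda)$ carries the duality $D_X(T)$ and, applying Theorem \ref{6-functor formalism for locally constructible h-motives} to $U\to S$, $D^b_{ctf}(U_{\acute{e}t},\Lambda)$ carries $D_U(T)$, while $D^b_{ctf,Z}(X_{\acute{e}t},\Lambda)$ carries $D_Z(T)$ as agreed in the paragraph preceding the statement. By Lemma \ref{duality on induced morphisms via closed and open immersions}(a), $i_*$ is a morphism of triangulated categories with duality, which in particular supplies the natural isomorphism $i_*D_Z(T)\simeq D_X(T)i_*$; hence $D_X(T)$ preserves the essential image of $i_*$, so $D^b_{ctf,Z}(X_{\acute{e}t},\Lambda)$ is a thick triangulated subcategory of $(D^b_{ctf}(X_{\acute{e}t},\Lambda),D_X(T))$ stable under the duality. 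Consequently the Verdier quotient inherits a duality in the sense of \cite{balmer2000triangular}, and by Lemma \ref{duality on induced morphisms via closed and open immersions}(b) the natural isomorphism $j^*D_X(T)\simeq D_U(T)j^*$ identifies this quotient duality, under the equivalence of the quotient with $D^b_{ctf}(U_{\acute{e}t},\Lambda)$ from the first part, with $D_U(T)$. Putting the two parts together yields exactly the localization sequence \ref{basic localization sequence with duality} of triangulated categories with duality.

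The step I expect to be the main obstacle is the identification of the Verdier quotient $D^b_{ctf}(X_{\acute{e}t},\Lambda)/D^b_{ctf,Z}(X_{\acute{e}t},\Lambda)$ with $D^b_{ctf}(U_{\acute{e}t},\Lambda)$ \emph{on the nose}: the naive localization of the big derived categories is classical, but one must be sure that restricting to constructible, finite-Tor-dimension objects introduces no idempotent-completion gap (equivalently, that $D^b_{ctf,Z}$ is ``dense enough'' in the relevant sense). Rather than arguing this by hand I would derive it from the localization/continuity properties of the constructible subcategories $DM_{h,lc}$ established in \cite{cisinski2016etale}, which are precisely designed so that the six-functor formalism and Verdier localization are compatible; all the remaining duality bookkeeping is then a formal consequence of Lemma \ref{duality on induced morphisms via closed and open immersions} and Theorem \ref{6-functor formalism for locally constructible h-motives}.
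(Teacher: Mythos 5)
Your proposal is correct and follows essentially the same route as the paper: the exactness of the restricted sequence is transported from the localization property of $DM_{h,lc}$ via \cite[Theorem 6.3.11, Corollary 6.3.15]{cisinski2016etale}, and the compatibility of $i_*$ and $j^*$ with the dualities is exactly Lemma \ref{duality on induced morphisms via closed and open immersions}. The extra details you supply (the $j_!$ argument for essential surjectivity and the explicit identification of the quotient duality with $D_U(T)$) are just a more careful spelling-out of what the paper's terse proof leaves implicit.
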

\begin{proof}
Using Grothendieck's six-functor formalism for $DM_{h,lc}((-)_{\acute{e}t},\Lambda)$ in \cite[Corollary 6.3.15]{cisinski2016etale} and the identification of $D^b_{ctf}((-)_{\acute{e}t},\Lambda)$ with $DM_{h,lc}((-)_{\acute{e}t},\Lambda)$ in  \cite[Theorem 6.3.11]{cisinski2016etale} we observe that the morphisms of triangulated categories in \ref{basic localization sequence with duality} define a localizing sequence. In view of Lemma \ref{duality on induced morphisms via closed and open immersions} the proof of the theorem is complete.
\end{proof}

\section{Witt theory of constructible sheaves}\label{constructible Witt theory}
In this section we now define the constructible Witt groups of \'etale sheaves of $\Lambda$-modules on a noetherian scheme $X$ as Balmer's Witt groups of the triangulated category with duality  $D^b_{ctf}(X_{\acute{e}t},\Lambda)$ discussed in the 
section \ref{Derived categories of constructible sheaves with tor-finite cohomology}. We describe some of their basic properties. 
We will work under the assumptions as in Theorem \ref{6-functor formalism for locally constructible h-motives} for the schemes $B$, $S$, $X$, the ring $\Lambda$ and the map $f:X\rightarrow S$.

\begin{definition}[Constructible Witt groups]
The category $(D^b_{ctf}(X_{\acute{e}t},\Lambda),D_{X}(T))$
is a triangulated category with duality for a $\otimes$-invertible object $T\in D^b_{ctf}(S_{\acute{e}t},\Lambda)$. 
The constructible Witt groups of sheaves of $\Lambda$-modules on $X$ are Witt groups of this triangulated category with duality. We will denote these by $W^n_c(X_{\acute{e}t}, \Lambda, T)$. In notations we may suppress the mention of the $\otimes$-invertible object $T$ and use the notation  $W^n_c(X_{\acute{e}t}, \Lambda)$.
\end{definition}

\subsection{Functoriality for constructible Witt groups }
\label{functoriality of dbctf}

We can directly use Proposition \ref{derived category pullbacks and pushforwards} to get the functoriality of constructible Witt groups. Consider the same assumptions on $B$, $X$, $\Lambda$ and $S$ as in Proposition \ref{derived category pullbacks and pushforwards} and $T$ a $\otimes$-invertible object in $D^b_{ctf}(S_{\acute{e}t},\Lambda)$. 

For an \'etale morphism $h:X\rightarrow Y$ of $S$-schemes, we get a morphism of triangulated categories with duality
\[h^*: (D^b_{ctf}(Y_{\acute{e}t},\Lambda), g^!T) \rightarrow 
(D^b_{ctf}(X_{\acute{e}t},\Lambda), f^!T).\]
Then $h^\ast$ induces a group homomorphism of constructible Witt groups
$$W^{n}(h^\ast):W^n_c(Y_{\acute{e}t},\Lambda)\rightarrow W^n_c(X_{\acute{e}t},\Lambda)$$
for all $n\in \mathbb{Z}$.

Also, when $h: X\rightarrow Y$ is a proper morphism of $S$-schemes, we get a morphism of triangulated categories with duality   
\[h_\ast: (D^b_{ctf}(X_{\acute{e}t},\Lambda), f^!T) \rightarrow 
(D^b_{ctf}(Y_{\acute{e}t},\Lambda), g^!T).\]
Then $h_\ast$ induces a group homomorphism of constructible Witt groups
$$W^{n}(h_\ast):W^n_c(X_{\acute{e}t},\Lambda)\rightarrow W^n_c(Y_{\acute{e}t},\Lambda)$$
for all $n\in \mathbb{Z}$.

\subsection{Localization sequence for constructible Witt groups}
Now we describe the localization sequence for the constructible Witt theory.
Recall that a \emph{localization of triangulated categories}
\begin{equation}\label{Localization of triangulated categories}
    J\xrightarrow{j} K\xrightarrow{q} L
\end{equation}
is an exact sequence of triangulated categories, that is, $L$ is a localization of $K$ with respect to a saturated class of morphisms $S$, and the triangulated category $J$ is the full subcategory of $K$ consisting of objects $X \in K$ for which $q(X) = 0$ in $L$.

If $(K, \#,\omega)$ is a triangulated category with duality and the class of morphisms $S$ is \emph{compatible} with duality on $K$ i.e. $\#(S)=S$, then $J$ endowed with restriction of $\#$ and $S^{-1}K$ endowed with the localization of $\#$ are triangulated categories with duality and the exact natural transformations $j$ and $q$ are morphisms of triangulated categories with duality. Such a localization sequence of triangulated categories with duality gives a 12-term exact sequence of corresponding Witt groups.

The localization sequence for the constructible Witt theory is a consequence of the localization sequence
of triangulated categories with duality in Theorem \ref{6-functor formalism for locally constructible h-motives}. 
Given a closed immersion $i:Z\hookrightarrow X$ with  the complementary open immersion $j:U=X-Z \hookrightarrow X$, Theorem \ref{localization sequence of triangulated categories with duality} gives the localization of triangulated categories with duality
$$(D^b_{ctf}(Z_{\acute{e}t},\Lambda), D_Z(T)) \xrightarrow{i_*} (D^b_{ctf}(X_{\acute{e}t},\Lambda), D_X(T))\xrightarrow{j^*} D^b_{ctf}(U_{\acute{e}t},\Lambda), D_U(T)).$$
The corresponding 12-term exact sequence of constructible Witt groups can be written as 
 $$\cdots \rightarrow W_{c}^{n-1}(U_{\acute{e}t},\Lambda) \xrightarrow{\partial^{n-1}} W^n_c(Z_{\acute{e}t},\Lambda) \xrightarrow{W^{n}(i_\ast)} W^n_c(X_{\acute{e}t},\Lambda)$$
$$\xrightarrow{W^{n}(j^\ast)} W^n_c(U_{\acute{e}t},\Lambda) \xrightarrow{\partial^n} W^{n+1}_c(Z_{\acute{e}t},\Lambda)\rightarrow \cdots$$
using the 4-periodicity of triangular Witt groups.

\subsection{Homotopy Invariance} With the techniques used in this paper we are not in a position to prove homotopy invariance of constructible Witt theory simply because duality is not compatible with the six-functor formalism for $D^b_{ctf}((-)_{\acute{e}t},\Lambda)$ in the same generality. To be specific, under the running assumptions in this section, for a vector bundle $p:V\rightarrow X$, the pullback functor $p^*:D^b_{ctf}(X_{et},\Lambda) \rightarrow D^b_{ctf}(V_{et},\Lambda) $ is fully faithful since the unit of adjunction $1\rightarrow p_*p^*$ is an isomorphism but $p_*$ is not compatible with dualities. It is instinctive to expect homotopy invariance for this theory, but then one should develop the theory in a more flexible setting for six-functor formalism - perhaps in the setting of infinity-categories.

 \section{Constructible Witt theory of $\spec  \mathbb{R}$}
 \label{constructible Witt theory of fields}
In this section we will identify the constructible Witt theory of \'etale sheaves of $\Lambda$-modules on $\spec \mathbb{R}$, for a ring $\Lambda$ of finite characteristic not equal to 2, as a $\mathbb{Z}/2\mathbb{Z}$-equivariant Witt theory of $\Lambda$.   Also for a real projective variety $f:X\rightarrow \spec  \mathbb{R}$ we construct natural homomorphisms 
\[W^{i}(f_*): W^i_c(X, \Lambda)\rightarrow W^i_c(\mathbb{R}, \Lambda)=W^i_{lf}(\Lambda[\mathbb{Z}/2\mathbb{Z}])\] 
which define an algebraic version of signature for $X$.

Let $X=\spec k$ be the affine scheme defined by a field  $k$. Let $k^{sep}$ be a separable closure of $k$ and let $Gal(k^{sep}/k)$ be the Galois group of $k^{sep}/k$ equipped with the canonical structure of a profinite group.
For each $k$-scheme $X^\prime$ we denote by $X^\prime(k^{sep})$ the set of $k^{sep}$-valued points on $X^\prime/k$, i.e. the set of $k$-morphisms $\spec k^{sep}\rightarrow X^\prime$. A $k^{sep}$-valued point of $X^\prime$ corresponds uniquely to a point $x^\prime \in X^\prime$ together with a $k$-homomorphism $k(x^\prime)\rightarrow k^{sep}$. 
The following equivalence of the category of \'etale sheaves of $\Lambda$-modules on $\spec k$ with the category of 
$\Lambda$-modules with continuous $Gal(k^{sep}/k)$-action for the profinite topology on the Galois group is well-known. 

\begin{theorem}\label{Most useful corollary}
For the scheme $\spec k$ the functor defined by taking stalks at the geometric point of $\spec k$ given by a choice $k\subset k^{sep}$ of the separable closure 
$$\mathcal{F}\mapsto \varinjlim_{k'} \ \mathcal{F}(\spec k^{\prime}),\ \ \ \ k\subset k'\subset k^{sep} \ and\ k'\ a\ finite\ extension\ of\ k$$
is an equivalence between the category of sheaves of $\Lambda$-modules on $(\spec k)_{\acute{e}t}$ and the category of continuous $\Lambda[G]$-modules for a group ring $\Lambda[G]$. Thus, in the case of the field $\mathbb{R}$ of real numbers we have an equivalence 
\[ \phi: Shv((\spec \mathbb{R})_{\acute{e}t}, \Lambda)\rightarrow 
Mod(\Lambda[\mathbb{Z}/2\mathbb{Z}]) \]
of the category of \'etale sheaves of $\Lambda$-modules on $\spec \mathbb{R}$ with the category of all $\mathbb{Z}/2\mathbb{Z}$-equivariant $\Lambda$-modules.
\end{theorem}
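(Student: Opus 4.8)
The plan is to reduce the statement to the classical equivalence between the small \'etale site of $\spec k$ and the site of finite continuous $G$-sets, where $G = Gal(k^{sep}/k)$ carries its profinite topology. First I would recall that $\acute{E}t/\spec k$ is equivalent to the category of finite \'etale $k$-algebras, and that by Galois theory every such algebra is a finite product of finite separable field extensions of $k$; sending such an algebra $A$ to the finite set $\mathrm{Hom}_k(A, k^{sep})$ of $k$-embeddings into $k^{sep}$ yields an anti-equivalence of $\acute{E}t/\spec k$ with the category of finite sets carrying a continuous $G$-action, under which a surjective family of \'etale morphisms corresponds to a jointly surjective family of maps of $G$-sets. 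Thus $(\spec k)_{\acute{e}t}$ and the site of finite continuous $G$-sets have equivalent categories of sheaves.

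Next I would transport sheaves of $\Lambda$-modules across this equivalence. A sheaf of $\Lambda$-modules on the site of finite continuous $G$-sets is the same datum as a $\Lambda$-module $M$ equipped with a continuous (equivalently, discrete or smooth) $G$-action: the sheaf attached to $M$ sends a finite $G$-set $S$ to $\mathrm{Hom}_G(S, M)$, and in particular sends $G/H$, for an open subgroup $H$, to $M^H$. Translating back through the Galois correspondence, the sheaf on $(\spec k)_{\acute{e}t}$ attached to $M$ has $\mathcal{F}(\spec k') = M^{Gal(k^{sep}/k')}$ for each finite subextension $k \subset k' \subset k^{sep}$. This construction $M \mapsto (\spec k' \mapsto M^{Gal(k^{sep}/k')})$ will be the candidate quasi-inverse.

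Then I would check that the stalk functor $\mathcal{F} \mapsto \varinjlim_{k'}\, \mathcal{F}(\spec k')$ at the geometric point determined by $k \subset k^{sep}$ is exactly the inverse of the construction above, as a functor between abelian categories. The group $G$ acts on $k^{sep}$ and hence on the filtered system of finite subextensions, inducing an action on the colimit; every element of the colimit already lies in some $\mathcal{F}(\spec k')$ and is therefore fixed by the open subgroup $Gal(k^{sep}/k')$, so the action is continuous. That the two functors are mutually inverse reduces to two identities: $\varinjlim_{k'} M^{Gal(k^{sep}/k')} = M$ for a discrete $\Lambda[G]$-module $M$ (this is precisely continuity of the action), and $\mathcal{F}(\spec k') = (\varinjlim_{k''} \mathcal{F}(\spec k''))^{Gal(k^{sep}/k')}$ for an \'etale sheaf $\mathcal{F}$ (Galois descent, i.e.\ the sheaf condition for the pro-cover $\spec k^{sep} \to \spec k'$). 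Both functors are additive and $\Lambda$-linear, so this gives the asserted equivalence of categories; the phrase "continuous $\Lambda[G]$-modules for a group ring $\Lambda[G]$" just records that the essential image of the stalk functor is the category of all discrete $\Lambda[G]$-modules.

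Finally, for $k = \mathbb{R}$ one has $k^{sep} = \mathbb{C}$ and $G = Gal(\mathbb{C}/\mathbb{R}) \cong \mathbb{Z}/2\mathbb{Z}$, a finite group with the discrete topology; for a finite group the continuity condition on a $\Lambda[G]$-module is vacuous, so continuous $\Lambda[\mathbb{Z}/2\mathbb{Z}]$-modules are all $\Lambda[\mathbb{Z}/2\mathbb{Z}]$-modules, and the general equivalence specializes to $\phi\colon Shv((\spec \mathbb{R})_{\acute{e}t}, \Lambda) \xrightarrow{\ \simeq\ } Mod(\Lambda[\mathbb{Z}/2\mathbb{Z}])$. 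I expect the only genuinely delicate point to be the continuity bookkeeping — that stalks of \'etale sheaves are always discrete $G$-modules and that the fixed-point construction recovers exactly these — together with the verification that the equivalence of underlying categories matches the two Grothendieck topologies; over a field these are essentially formal, which is why the result is classical, but they are the steps that make the two functors well defined and mutually inverse.
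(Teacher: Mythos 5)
Your argument is correct and is precisely the classical one: the paper itself offers no independent proof but simply cites Tamme (Corollary 2.2), where the same Galois-theoretic equivalence (finite \'etale $k$-algebras versus finite continuous $G$-sets, stalk functor versus fixed-point functor, continuity via open stabilizers) is carried out. Your specialization to $k=\mathbb{R}$, where $G=\mathbb{Z}/2\mathbb{Z}$ is finite and the continuity condition is vacuous, matches the intended reading of the statement.
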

\begin{proof}See \cite[Corollary 2.2]{tamme2012introduction} for details.
\end{proof}

\subsection{Flat and constructible sheaves on 
$(\spec \mathbb{R})_{\acute{e}t}$}
\label{Case of flat and constructible}
Let $\mathcal{F}$ be a flat and constructible sheaf of $\Lambda$-modules on $(\spec \mathbb{R})_{\acute{e}t}$. As $\mathcal{F}$ is a flat sheaf, the stalk of $\mathcal{F}$ at the single geometric point in $\spec\mathbb{R}$ given by $\spec\mathbb{C}$  is a flat $\Lambda$-module. 
Also $\mathcal{F}$ being a constructible sheaf of $\Lambda$-modules, it is locally constant and its stalk $\mathcal{F}_x$ is finitely generated $\Lambda$-module. 
Thus, for a flat and constructible sheaf  $\mathcal{F}$ on $(\spec \mathbb{R})_{\acute{e}t}$ the stalk $\mathcal{F}_x$
is a flat and finitely generated $\Lambda$-module. 
In this paper the coefficient rings are assumed to be noetherian and therefore for a flat and constructible sheaf $\mathcal{F}$ the stalk  $\mathcal{F}_x$
being flat and finitely generated is a 
finitely generated projective $\Lambda$-module with a $G=\mathbb{Z}/2\mathbb{Z}$ action.

Under the assumption that the characteristic of $\Lambda$ is different from 2 it so happens that the category of finitely generated projective $\Lambda$-modules with $\mathbb{Z}/2\mathbb{Z}$-action is equivalent to the category of finitely generated projective $\Lambda[\mathbb{Z}/2\mathbb{Z}]$-modules. For a lack of reference known to us we will recall this useful fact in terms of equivariant vector bundles on schemes.

\begin{lemma}
\label{equivariant splitting}
For an affine $G$-scheme $X$, under the assumption that $|G|$ is invertible in $\Gamma(X,\ \mc{O}_X)$, every short exact sequence of equivariant vector bundles splits equivariantly.
\end{lemma}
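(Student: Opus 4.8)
The plan is to reduce the statement to the classical averaging argument (the "transfer" or Maschke-style idea), applied in the equivariant setting. Let $p:E'\to E$ be a surjection of $G$-equivariant vector bundles on the affine $G$-scheme $X=\spec R$, where $R=\Gamma(X,\mathcal{O}_X)$ and $n=|G|$ is invertible in $R$. Since $X$ is affine, the underlying short exact sequence $0\to E''\to E'\xrightarrow{p} E\to 0$ of vector bundles splits as a sequence of $\mathcal{O}_X$-modules (projectivity of $E$ over $R$); choose any $\mathcal{O}_X$-linear section $s:E\to E'$ with $p\circ s=\mathrm{id}_E$, forgetting the $G$-action. The point is that $s$ need not be $G$-equivariant, but we can correct it by averaging.

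The key step is the averaging construction. For each $g\in G$ let $\sigma_g^{E}$ and $\sigma_g^{E'}$ denote the semilinear automorphisms giving the $G$-actions on $E$ and $E'$ (covering the action of $g$ on $X$), and define
\[
\tilde s \;=\; \frac{1}{n}\sum_{g\in G}\, (\sigma_g^{E'})^{-1}\circ s\circ \sigma_g^{E}.
\]
I would then check two things. First, $\tilde s$ is $G$-equivariant: replacing $s$ by $(\sigma_h^{E'})^{-1} s\, \sigma_h^{E}$ and reindexing the sum over $G$ shows $\tilde s\circ\sigma_h^{E}=\sigma_h^{E'}\circ\tilde s$ for all $h\in G$. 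Second, $\tilde s$ is still a section of $p$: since $p$ itself is $G$-equivariant, $p\circ(\sigma_g^{E'})^{-1} s\,\sigma_g^{E}=(\sigma_g^{E})^{-1}(p\circ s)\sigma_g^{E}=(\sigma_g^{E})^{-1}\sigma_g^{E}=\mathrm{id}_E$, and averaging $n$ copies of $\mathrm{id}_E$ and dividing by $n$ gives $p\circ\tilde s=\mathrm{id}_E$. Hence $\tilde s$ is an equivariant splitting, and the sequence splits equivariantly.

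The one subtlety to treat carefully — and the step I expect to be the main (mild) obstacle — is the semilinearity bookkeeping: the maps $\sigma_g^{E}$ are not $\mathcal{O}_X$-linear but $g$-semilinear, so one must verify that the composite $(\sigma_g^{E'})^{-1}\circ s\circ\sigma_g^{E}$ is genuinely $\mathcal{O}_X$-linear (the two semilinearities cancel since $g$ and $g^{-1}$ are inverse automorphisms of $R$), and that the factor $1/n\in R$ interacts correctly with these twists (it does, because $1/n$ is $G$-invariant as $G$ acts on $R$ and $n\cdot 1$ is the image of an integer). Once this is checked, the argument is exactly Maschke's theorem phrased for equivariant sheaves, and no further input is needed. (This is the only place the hypothesis that $|G|$ is invertible in $\Gamma(X,\mathcal{O}_X)$ is used, and it is clearly necessary.)
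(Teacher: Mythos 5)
Your proposal is correct and is essentially the paper's own argument: use affineness to get a non-equivariant splitting of the surjection and then average it over $G$ (the formula $\frac{1}{|G|}\sum_{g\in G} g\cdot s(g^{-1}\cdot -)$ in the paper is your $\tilde{s}$ after reindexing $g\mapsto g^{-1}$). Your extra checks of equivariance, the section property, and the semilinearity bookkeeping only spell out details the paper leaves implicit.
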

\begin{proof} Given a short exact sequence $0\rightarrow \mathcal{E}\xrightarrow{i} \mathcal{L}\xrightarrow{\phi} \mathcal{M}\rightarrow 0$ of $G$-equivariant vector bundles on $X$,
choose a (non-equivariant) splitting $s:\mathcal{M}\rightarrow \mathcal{L}$ and observe that the morphism 
$$\bar{s}:\mathcal{M}\rightarrow \mathcal{L}, \ \ \ a\mapsto \frac{1}{|G|}\sum_{g\in G}g\cdot s(g^{-1}\cdot a), \ \   a\in \mathcal{M}(U),\ U\subset X\ \mathrm{open}$$
of vector bundles is a $G$-equivariant splitting of $\phi$.  
\end{proof}
For an equivariant affine $G$-scheme $(\spec R,\tau)$, let $\rho_{_{R_\tau}}$ (or simply $\rho_{_R}$) be the skew group-ring $R_\tau G$ viewed as the twisted regular representation of $G$ over $R$. The $G$-action on $\rho_{_{R_\tau}}$ is given by \[g'\cdot(\Sigma_{g\in G}x_g\cdot e_g)= \Sigma_{g\in G}\tau_{g'}(x_g)\cdot e_{g'g},\] where $\{e_g:g\in G\}$ is the standard basis of $\rho_{_{R_\tau}}$
\begin{lemma}
\label{Equivalent Category of Summands in Regular Representations}
For an affine $G$-scheme $\spec R$, under the assumption that $|G|$ is invertible in $R$, every $G$-equivariant finitely generated projective $R$-module is a summand in a direct sum of regular representations.
\end{lemma}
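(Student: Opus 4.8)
The plan is to realize a given equivariant vector bundle $M$ on $\spec R$ as an equivariant direct summand of an induced module built from the regular representation, and then to apply the equivariant splitting of Lemma~\ref{equivariant splitting}. Throughout, write $F=R_\tau G$ for the skew group ring, so that $G$-equivariant $R$-modules are the same as left $F$-modules, and under this identification the twisted regular representation $\rho_{_R}$ is $F$ regarded as a left module over itself.

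First I would form the induced module $F\otimes_R M$, the tensor product being taken over $R$ via the right $R$-module structure on $F$, with left $F$-module structure coming from left multiplication on the first factor. Since $F$ is finitely generated free both as a left and as a right $R$-module, the underlying $R$-module of $F\otimes_R M$ is finitely generated projective, so $F\otimes_R M$ together with its $G$-action is an equivariant vector bundle on $\spec R$. Choosing an $R$-module $N$ with $M\oplus N\cong R^{n}$ and applying the additive functor $F\otimes_R(-)$ exhibits $F\otimes_R M$ as an equivariant direct summand of $F\otimes_R R^{n}\cong\rho_{_R}^{\oplus n}$, a finite direct sum of copies of the regular representation. It therefore suffices to show that $M$ is an equivariant direct summand of $F\otimes_R M$.

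For this I would use the counit of the induction--restriction adjunction, namely the $F$-linear (hence $G$-equivariant) evaluation map $\varepsilon\colon F\otimes_R M\to M$, $\xi\otimes m\mapsto \xi\cdot m$. It is surjective, since $1\otimes m\mapsto m$, and its kernel, being a direct summand of $F\otimes_R M$ as an $R$-module (the surjection splits $R$-linearly because $M$ is projective), is itself a finitely generated projective $R$-module. Hence
\[
0\longrightarrow \ker\varepsilon\longrightarrow F\otimes_R M \xrightarrow{\ \varepsilon\ } M\longrightarrow 0
\]
is a short exact sequence of equivariant vector bundles on $\spec R$, and since $|G|$ is invertible in $R$, Lemma~\ref{equivariant splitting} yields an equivariant section of $\varepsilon$. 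Concretely one may take $\sigma(m)=\tfrac1{|G|}\sum_{g\in G}e_g\otimes(e_{g^{-1}}\cdot m)$ and verify, using the relations $e_g e_h=e_{gh}$ and $e_g\cdot\tau_{g^{-1}}(x)=x\cdot e_g$ in $F$, that $\sigma$ is left $F$-linear with $\varepsilon\circ\sigma=\mathrm{id}_M$. Either way $M$ is an equivariant direct summand of $F\otimes_R M$, and hence of $\rho_{_R}^{\oplus n}$, which is the assertion.

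The only delicate points here are bookkeeping: keeping the left $F$- and right $R$-module structures on $F$ straight, so that $F\otimes_R M$ really is the induced representation and $\varepsilon$ really is $F$-linear, and observing that $\ker\varepsilon$ is a vector bundle --- which follows formally from projectivity of $M$, so no noetherian or excellence hypotheses are needed. The one substantive input is Lemma~\ref{equivariant splitting}; everything else is the standard ``a summand of a free module, induced up, surjects onto the original'' argument.
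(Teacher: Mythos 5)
Your proof is correct and is essentially the paper's argument: both realize $M$ as an equivariant quotient of a finite direct sum of twisted regular representations (your composite $\rho_{_R}^{\oplus n}\cong F\otimes_R R^n\twoheadrightarrow F\otimes_R M\xrightarrow{\varepsilon}M$ is exactly the induced surjection the paper writes down directly from $\alpha\colon R^n\twoheadrightarrow P$) and then split it equivariantly via the averaging Lemma~\ref{equivariant splitting}. Your packaging through the skew group ring and the counit of induction--restriction, with the explicit section $\sigma$ and the remark that the kernel is again a vector bundle, is just a more carefully bookkept version of the same proof.
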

\begin{proof} Let $\tau: G\rightarrow \mathrm{Aut}_{Sch/\spec \mathbb{Z}}(\spec R)$ denote the $G$-action on $\spec R$. Given an equivariant finitely generated projective $R$-module $P$ choose an epimorphism $\alpha: R^n\rightarrow P$. 
%The free rank 1 module $R$ is a $G$-equivariant summand in $\rho_{_{R_\tau}}$ via the inclusion 
%\[ R\longrightarrow \rho_{_{R_\tau}},\ \ \ x\mapsto \sum_{g\in G} \tau(x)e_g\]  
% and hence $R^n\hookrightarrow R^n\otimes_R \rho_{_{R_\tau}} = \rho_{_{R_\tau}}^n$ is a $G$-equivariant direct summand. 
The induced surjective map 
\[ \widetilde{\alpha}:\rho_{_{R_\tau}}^n \rightarrow P\ \ \ : \ \ \  \sum_{g\in G}x_g\cdot e_g \mapsto \sum_{g\in G}g\cdot \alpha(x_g)\]
is $G$-equivariant and we have an exact sequence of $G$-equivariant finitely generated projective $R$-modules 
$\mathrm{ker}\ \widetilde{\alpha}\xrightarrow{\ \ \ } \rho_{_{R_\tau}}^n \xrightarrow{\ \ \widetilde{\alpha}\ \ } P.$
Choosing an equivariant splitting by Lemma \ref{equivariant splitting} we see that $P$ is a direct summand of $\rho_{{R_\tau}}^n$.
\end{proof}

\begin{corollary}
\label{identification of projective C_2-equivariant Lambda-modules}
For the trivial action of the group $\mathbb{Z}/2\mathbb{Z}$ on $\spec \Lambda$ the category of $\mathbb{Z}/2\mathbb{Z}$-equivariant finitely generated projective $\Lambda$-modules is equivalent to the category of finitely generated projective $\Lambda[\mathbb{Z}/2\mathbb{Z}]$-modules. Thus, for a constructible and flat sheaf $\mathcal{F}$ of $\Lambda$-modules on $(\spec \mathbb{R})_{\acute{e}t}$, the stalk $\mathcal{F}_x$
is a finitely generated projective $\Lambda[\mathbb{Z}/2\mathbb{Z}]$-module.
\end{corollary}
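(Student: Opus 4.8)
The plan is to reduce the corollary to a statement purely about module categories. For the \emph{trivial} $\mathbb{Z}/2\mathbb{Z}$-action on $\spec\Lambda$, giving a $\mathbb{Z}/2\mathbb{Z}$-equivariant $\Lambda$-module is the same as giving a $\Lambda$-module $M$ together with an action of $\mathbb{Z}/2\mathbb{Z}$ by $\Lambda$-linear automorphisms, and that data is precisely a module over the group ring $\Lambda[\mathbb{Z}/2\mathbb{Z}]$. Under this identification the category of $\mathbb{Z}/2\mathbb{Z}$-equivariant finitely generated projective $\Lambda$-modules becomes the full subcategory of $Mod(\Lambda[\mathbb{Z}/2\mathbb{Z}])$ on those modules whose underlying $\Lambda$-module is finitely generated projective, whereas $Proj(\Lambda[\mathbb{Z}/2\mathbb{Z}])$ is the full subcategory on those modules that are finitely generated projective over $\Lambda[\mathbb{Z}/2\mathbb{Z}]$; the functor comparing them is restriction of scalars along $\Lambda\hookrightarrow\Lambda[\mathbb{Z}/2\mathbb{Z}]$ (equivalently, remembering only the underlying equivariant $\Lambda$-module). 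So it suffices to show these two full subcategories of $Mod(\Lambda[\mathbb{Z}/2\mathbb{Z}])$ coincide.

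One inclusion is immediate: if $P$ is finitely generated projective over $\Lambda[\mathbb{Z}/2\mathbb{Z}]$, then $P$ is a direct summand of some $\Lambda[\mathbb{Z}/2\mathbb{Z}]^n$, whose underlying $\Lambda$-module is free of rank $2n$, so the underlying $\Lambda$-module of $P$ is a direct summand of $\Lambda^{2n}$ and hence finitely generated projective.

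For the reverse inclusion, let $P$ be a $\mathbb{Z}/2\mathbb{Z}$-equivariant $\Lambda$-module whose underlying $\Lambda$-module is finitely generated projective. Since $2=|\mathbb{Z}/2\mathbb{Z}|$ is invertible in $\Lambda$, Lemma \ref{Equivalent Category of Summands in Regular Representations} applies with $R=\Lambda$ and $G=\mathbb{Z}/2\mathbb{Z}$. Because the $G$-action on $\spec\Lambda$ is trivial, the twisting $\tau$ is trivial, so the skew group ring $\Lambda_\tau G$ is the ordinary group ring and the twisted regular representation $\rho_{\Lambda}$ is exactly $\Lambda[\mathbb{Z}/2\mathbb{Z}]$ with its left-regular $G$-action, i.e. the free rank-one $\Lambda[\mathbb{Z}/2\mathbb{Z}]$-module. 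The lemma then exhibits $P$ as a $G$-equivariant $\Lambda$-linear direct summand of $\rho_{\Lambda}^n\cong\Lambda[\mathbb{Z}/2\mathbb{Z}]^n$. But a $G$-equivariant $\Lambda$-linear idempotent is the same thing as a $\Lambda[\mathbb{Z}/2\mathbb{Z}]$-linear idempotent, so $P$ is a direct summand of the free $\Lambda[\mathbb{Z}/2\mathbb{Z}]$-module $\Lambda[\mathbb{Z}/2\mathbb{Z}]^n$, hence finitely generated projective over $\Lambda[\mathbb{Z}/2\mathbb{Z}]$. This gives the asserted equivalence of categories.

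Finally, the statement about stalks follows by combining this equivalence with the discussion in \S\ref{Case of flat and constructible}: there one sees that for a flat constructible sheaf $\mathcal{F}$ of $\Lambda$-modules on $(\spec\mathbb{R})_{\acute{e}t}$ the stalk $\mathcal{F}_x$ is a finitely generated projective $\Lambda$-module carrying a $\mathbb{Z}/2\mathbb{Z}=Gal(\mathbb{C}/\mathbb{R})$-action, i.e. an object of the left-hand category, which the equivalence identifies with a finitely generated projective $\Lambda[\mathbb{Z}/2\mathbb{Z}]$-module. I do not expect a real obstacle here; the only point needing care is the translation, in the reverse inclusion, from the \emph{equivariant} splitting produced by Lemmas \ref{equivariant splitting} and \ref{Equivalent Category of Summands in Regular Representations} to a genuine $\Lambda[\mathbb{Z}/2\mathbb{Z}]$-module splitting, and that is exactly the dictionary between equivariant $\Lambda$-linear maps and $\Lambda[\mathbb{Z}/2\mathbb{Z}]$-linear maps that makes the trivial-action case straightforward.
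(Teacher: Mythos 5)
Your argument is correct and follows essentially the same route as the paper: the paper's proof likewise invokes Lemma \ref{Equivalent Category of Summands in Regular Representations} (with trivial twist, so the regular representation is just $\Lambda[\mathbb{Z}/2\mathbb{Z}]$) together with the discussion at the start of \S\ref{Case of flat and constructible} identifying stalks of flat constructible sheaves as finitely generated projective $\Lambda$-modules with $\mathbb{Z}/2\mathbb{Z}$-action. You merely spell out the dictionary between equivariant $\Lambda$-modules and $\Lambda[\mathbb{Z}/2\mathbb{Z}]$-modules and the easy converse inclusion, which the paper leaves implicit.
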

\begin{proof}
Follows from Lemma \ref{Equivalent Category of Summands in Regular Representations} and discussion in the beginning of this subsection in view of the assumption that the characteristic of $\Lambda$ is different from 2.
\end{proof}

\subsection{Equivalence of  $D^b_{ctf}((\spec \mathbb{R})_{\acute{e}t},\Lambda)$ with 
$D^{b}(Proj(\Lambda[G]))$ and 
Constructible Witt groups of $\spec \mathbb{R}$} Using the description of stalks
of flat and constructible sheaves on $(\spec \mathbb{R})_{\acute{e}t}$  as finitely generated projective $\Lambda[\mathbb{Z}/2\mathbb{Z}]$-modules 
in Corollary \ref{identification of projective C_2-equivariant Lambda-modules} we now describe the constructible Witt theory of $\Lambda$-modules on $\spec \mathbb{R}$ as a $\mathbb{Z}/2\mathbb{Z}$-equivariant Witt theory of $\Lambda$.
The equivalence \[\phi: Shv((\spec \mathbb{R})_{\acute{e}t}, \Lambda)\rightarrow 
Mod(\Lambda[\mathbb{Z}/2\mathbb{Z}]) \]
 in Theorem \ref{Most useful corollary}
 is an exact equivalence of categories and induces an exact equivalence on the associated categories of chain complexes
\[\phi: Ch(Shv((\spec \mathbb{R})_{\acute{e}t}, \Lambda))\rightarrow 
Ch(Mod(\Lambda[\mathbb{Z}/2\mathbb{Z}])). \]
Let 
\begin{equation}
\label{Phi} 
\Phi: D((\spec \mathbb{R})_{\acute{e}t}, \Lambda)\rightarrow D(Mod(\Lambda[\mathbb{Z}/2\mathbb{Z}]))
\end{equation}
be the induced functor on the derived categories. 
The category $D^{b}_{ctf}((\spec \mathbb{R})_{\acute{e}t},\Lambda)$ of bounded complexes of sheaves of $\Lambda$-modules having Tor-finite dimension and constructible cohomology sheaves can now be identified with the more familiar bounded derived category of 
$D^b(Proj(\Lambda[\mathbb{Z}/2\mathbb{Z}]))$ of finitely generated projective $\Lambda[\mathbb{Z}/2\mathbb{Z}]$-modules giving us the the identification of the constructible Witt theory of $\spec \mathbb{R}$ with a Witt theory of finitely generated projective $\Lambda[\mathbb{Z}/2\mathbb{Z}]$-modules.

\begin{theorem}
\label{Main equivalence of derived categories}
Let $\Lambda$ a ring of finite characteristic not equal to $2$. Then the 
functor in $\mathrm{(\ref{Phi})}$
induces an equivalence of triangulated categories with duality 
$$\Phi: D^{b}_{ctf}((\spec \mathbb{R})_{\acute{e}t},\Lambda), \mathcal{H}om(-,\Lambda))\rightarrow (D^{b}(Proj(\Lambda[\mathbb{Z}/2\mathbb{Z}])), Hom_{\Lambda[\mathbb{Z}/2\mathbb{Z}]}(-, \Lambda))$$
and induces an isomorphism of constructible Witt theory of $\spec \mathbb{R}$
with an equivariant Witt theory of finitely generated projective $\Lambda$-modules for the action of the group $\mathbb{Z}/2\mathbb{Z}$ (the absolute Galois group of $\mathbb{R}$).
\end{theorem}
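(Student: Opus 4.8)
The plan is to deduce the theorem formally from the fact, recorded in Theorem~\ref{Most useful corollary}, that the stalk functor $\phi$ is an \emph{exact} equivalence of abelian categories which is moreover symmetric monoidal: $\phi$ commutes with $\otimes_\Lambda$ and carries the constant sheaf $\Lambda$ (the $\otimes$-unit) to the trivial $\mathbb{Z}/2\mathbb{Z}$-module $\Lambda$, and being an equivalence it is then automatically compatible with internal homs. Passing to complexes and localizing, the functor $\Phi$ of~(\ref{Phi}) is an exact, symmetric monoidal equivalence $D^b(Shv((\spec\mathbb{R})_{\acute{e}t},\Lambda))\iso D^b(Mod(\Lambda[\mathbb{Z}/2\mathbb{Z}]))$; it then remains to see that it restricts to the two subcategories in the statement and transports the duality, after which Balmer's functoriality from Section~\ref{preliminaries} produces the isomorphism of Witt groups.

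\emph{Restriction to $D^b_{ctf}$.} Since $\spec\mathbb{R}$ is a one-point scheme, a sheaf of $\Lambda$-modules on it is constructible exactly when its stalk is a finitely generated $\Lambda$-module, equivalently a finitely generated $\Lambda[\mathbb{Z}/2\mathbb{Z}]$-module; hence $\Phi$ matches complexes with constructible cohomology on the left with complexes with finitely generated cohomology on the right. Being monoidal, $\Phi$ also preserves finite Tor-dimension, and by Section~\ref{Case of flat and constructible} together with Corollary~\ref{identification of projective C_2-equivariant Lambda-modules} it carries flat constructible sheaves to finitely generated projective $\Lambda[\mathbb{Z}/2\mathbb{Z}]$-modules and conversely. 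Using the description of $D^b_{ctf}$ as the complexes quasi-isomorphic to bounded complexes of flat constructible sheaves---and, if one prefers, the standard fact that over the noetherian ring $\Lambda[\mathbb{Z}/2\mathbb{Z}]$ a bounded complex of finitely generated modules has finite Tor-dimension iff it is perfect iff it is quasi-isomorphic to a bounded complex of finitely generated projectives---we get that $\Phi$ restricts to an equivalence $D^b_{ctf}((\spec\mathbb{R})_{\acute{e}t},\Lambda)\iso D^b(Proj(\Lambda[\mathbb{Z}/2\mathbb{Z}]))$.

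\emph{Transport of the duality.} For the structure morphism $f=\mathrm{id}_{\spec\mathbb{R}}$ and $T=\Lambda$ one has $f^!T=\Lambda$, so the duality on $D^b_{ctf}((\spec\mathbb{R})_{\acute{e}t},\Lambda)$ supplied by Theorem~\ref{6-functor formalism for locally constructible h-motives} is simply the derived internal hom $R\mathcal{H}om(-,\Lambda)$ into the $\otimes$-unit of étale sheaves. Because $\Phi$ is a monoidal equivalence it commutes with derived internal hom and sends the unit to the unit, hence intertwines $R\mathcal{H}om(-,\Lambda)$ with the derived internal hom of $\mathbb{Z}/2\mathbb{Z}$-equivariant $\Lambda$-modules into $\Lambda$, i.e.\ with $\mathrm{Hom}_\Lambda(-,\Lambda)$ equipped with the contragredient $\mathbb{Z}/2\mathbb{Z}$-action (written $\mathrm{Hom}_{\Lambda[\mathbb{Z}/2\mathbb{Z}]}(-,\Lambda)$ in the statement). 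On a bounded complex of finitely generated projective equivariant $\Lambda$-modules this internal hom is computed termwise, and---using that $2$ is invertible in $\Lambda$, through Lemma~\ref{equivariant splitting} and Corollary~\ref{identification of projective C_2-equivariant Lambda-modules}---the $\Lambda$-linear dual of such a module is again one, so the functor really preserves $D^b(Proj(\Lambda[\mathbb{Z}/2\mathbb{Z}]))$; the bidual natural transformation is the usual evaluation isomorphism $\omega$, and it corresponds under $\Phi$ to the evaluation isomorphism on the sheaf side since $\Phi$ is monoidal. Thus $\Phi$ is an equivalence of triangulated categories with $(+1)$-duality, and the same argument applies verbatim after shifting by $T^n$ for every $n\in\mathbb{Z}$. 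Finally, an equivalence of triangulated categories with duality is in particular a morphism of such categories in the sense of Section~\ref{preliminaries}, admitting an inverse of the same type, so the functoriality recalled there yields isomorphisms $W^n(\Phi)\colon W^n_c((\spec\mathbb{R})_{\acute{e}t},\Lambda)\iso W^n(D^b(Proj(\Lambda[\mathbb{Z}/2\mathbb{Z}])),\mathrm{Hom}_{\Lambda[\mathbb{Z}/2\mathbb{Z}]}(-,\Lambda))$ for all $n$, the right-hand side being by definition the $\mathbb{Z}/2\mathbb{Z}$-equivariant Witt theory of finitely generated projective $\Lambda$-modules.

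The real content---and the only genuine obstacle---is the duality step: one must verify that the abstract duality $D_X(T)$ from the six-functor formalism reduces, on the point $\spec\mathbb{R}$ with $T=\Lambda$, to the naive derived internal hom $R\mathcal{H}om(-,\Lambda)$, and that $\phi$ is honestly symmetric monoidal so that it transports internal hom, the dualizing object, and the bidual isomorphism. The hypothesis that $2$ is invertible in $\Lambda$ enters precisely here, to keep the dual of an equivariant projective module projective via Lemma~\ref{equivariant splitting}; the identification of the two bounded constructible subcategories and the passage to Witt groups are then formal.
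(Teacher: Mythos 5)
Your proof is correct and follows essentially the same route as the paper: restrict $\Phi$ to the Tor-finite constructible subcategory via Corollary \ref{identification of projective C_2-equivariant Lambda-modules}, check that $\Phi$ intertwines the dualities $\mathcal{H}om(-,\Lambda)$, and conclude by Balmer's functoriality. The only differences are cosmetic: you transport the duality through the (symmetric monoidal) equivalence and make explicit that $\mathrm{Hom}_{\Lambda[\mathbb{Z}/2\mathbb{Z}]}(-,\Lambda)$ is to be read as the $\Lambda$-linear dual with contragredient action, whereas the paper instead exhibits the fixed-point quasi-inverse $\Psi$ and notes that the duality square commutes because $\Lambda$ is the constant sheaf.
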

\begin{proof}
The category $D^b_{ctf}((\spec \mathbb{R})_{\acute{e}t},\Lambda)
\subset D((\spec \mathbb{R})_{\acute{e}t}, \Lambda)$ is the full triangulated subcategory consisting of complexes that are quasi-isomorphic in $D((\spec \mathbb{R})_{\acute{e}t}, \Lambda)$ to bounded complexes whose components are flat and constructible sheaves of $\Lambda$-modules. By Corollary \ref{identification of projective C_2-equivariant Lambda-modules} we have
the restriction
$$\Phi:D^{b}_{ctf}((\spec \mathbb{R})_{\acute{e}t},\Lambda) \rightarrow 
D^{b}(Proj(\Lambda[\mathbb{Z}/2\mathbb{Z}]))$$
of $\Phi$ is functor into the bounded derived category  $D^{b}(Proj(\Lambda[\mathbb{Z}/2\mathbb{Z}]))$ of finitely generated projective $\Lambda[\mathbb{Z}/2\mathbb{Z}]$-modules.
Recall that a  quasi-inverse 
\[\psi: Mod(\Lambda[\mathbb{Z}/2\mathbb{Z}])\rightarrow 
Shv((\spec \mathbb{R})_{\acute{e}t}, \Lambda)\]
of the functor $\phi: Shv((\spec \mathbb{R})_{\acute{e}t}, \Lambda)\rightarrow 
Mod(\Lambda[\mathbb{Z}/2\mathbb{Z}])$ 
is defined by associating to a $\Lambda[\mathbb{Z}/2\mathbb{Z}]$-module $M$ its $\mathbb{Z}/2\mathbb{Z}$-fixed points $M^{\mathbb{Z}/2\mathbb{Z}}$ to the base scheme $\spec \mathbb{R}$ and  the module $M$ to the scheme $\spec \mathbb{C}$. It is an exact functor and 
induces a quasi-inverse 
\[\Psi : D^{b}(Proj(\Lambda , \mathbb{Z}/2\mathbb{Z})) \rightarrow D^{b}_{ctf}((\spec \mathbb{R})_{\acute{e}t},\Lambda)\]
of $\Phi$. The diagram of triangulated categories 
\[
\xymatrix{
D_{ctf}^b((\spec \mathbb{R})_{\acute{e}t},\Lambda)^{op} \ar[rrr]^{\ \ \ \mathcal{H}om(-, \Lambda)\ \ \ } \ar[d]^{\Phi} &&& D_{ctf}^b((\spec \mathbb{R})_{\acute{e}t},\Lambda)\ar[d]^{\Phi}\\
D^{b}(Proj(\Lambda[\mathbb{Z}/2\mathbb{Z}]))^{op} \ar[rrr]^{ Hom_{\Lambda[\mathbb{Z}/2\mathbb{Z}]}(-, \Lambda) \ } &&&D^{b}(Proj(\Lambda[\mathbb{Z}/2\mathbb{Z}])) }
\]
commutes since the sheaf $\Lambda$ defining the duality $\mathcal{H}om(-, \Lambda)$ is the constant sheaf. Therefore the equivalence $\Phi$ is an equivalence of triangulated categories with duality. This completes the proof.
\end{proof}

\section{constructible Witt theory of complex algebraic varieties}
\label{section 6}
In this section, for a smooth complex algebraic variety $X$, we will identify the constructible Witt theory of \'etale sheaves of $\Lambda$-modules for finite commutative ring $\Lambda$ in which $2$ is a unit, with the constructible Witt theory of the underlying analytic space $X^{an}$ defined topologically in \cite{woolf2008witt}. 

\subsection{Algebraic and topological  comparison}
For a topological space $T$ and a ring $A$, an $A$-local system on $T$ is a sheaf of $A$-modules on $T$ that is locally constant. We denote by $Loc_A(T)$ the category of $A$-local systems on $T$. If $T$ is path-connected, then the category $Rep_A(\pi_1(T,p))$, consisting of representations of the fundamental group $\pi_1(T,p)$ on $A$-modules, is equivalent to the category $Loc_A(T)$ for any chosen point $p\in T$.

Let $X$ be a smooth complex algebraic variety, and $\Lambda$ a finite commutative ring. Denote by $X^{an}$ the underlying analytic space of complex points of the variety $X$. Then $X^{an}$ becomes a complex manifold with the classical Euclidean topology. Let $D^b_c(X^{an},\Lambda)$ denote the derived category of complexes of sheaves on the topological space $X^{an}$ with constructible cohomologies. Then we have an equivalence of categories $Loc_\Lambda(X_{\acute{e}t}) \simeq Rep_\Lambda(\pi_1(X^{an},x))$ given by $\mathcal{F}\mapsto (\mathcal{F}^{an})_x$ for a point $x\in X^{an}$. Here   $Loc_\Lambda(X_{\acute{e}t})$ denotes the category of locally constant sheaves of $\Lambda$-modules on $X_{\acute{e}t}$.

The following theorem, proved in \cite[Section 6]{MR751966}, is foundational to the comparison result of topological and algebraic derived categories of locally constant constructible sheaves.

\begin{theorem}
\label{BBD result}
For a smooth algebraic variety $X$ over $\mathbb{C}$, there exist a morphism of topoi
\[ \varepsilon: X^{an}\rightarrow X_{\acute{e}t}.  \]
Given a finite ring $\Lambda$, this induces an equivalence of categories:
\[
\varepsilon^*:
\left\{ \parbox{62mm}{\raggedright Category of constructible sheaves of $\Lambda$-modules on $X_{\acute{e}t}$}\right\}\xrightarrow{\simeq}
\left\{ \parbox{62mm}{\raggedright Category of constructible sheaves of $\Lambda$-modules on $X^{an} \color{white}{_a}$}\right\}
.\]
This equivalence further extends to derived categories, giving: 

$$\varepsilon^* : D^b_c(X_{\acute{e}t},\Lambda) \xrightarrow{\simeq} D^b_c(X^{an},\Lambda),$$
where $D^b_c(X_{\acute{e}t},\Lambda)$ denotes the bounded derived category of complexes of sheaves of $\Lambda$-modules on the $X_{\acute{e}t}$ with constructible cohomologies. Furthermore this equivalence is compatible with the six-functor formalism.

\end{theorem}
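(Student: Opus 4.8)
The plan is to construct the morphism of topoi and then reduce every clause of the theorem, by d\'evissage, to the comparison theorems of M.~Artin for \'etale cohomology with finite coefficients. The morphism $\varepsilon\colon X^{an}\to X_{\acute{e}t}$ is analytification: an \'etale morphism $U\to X$ of schemes analytifies to a local biholomorphism $U^{an}\to X^{an}$, so sending $U\in\acute{E}t/X$ to $U^{an}$ over $X^{an}$ is a continuous map of sites, and the associated pullback $\varepsilon^*$ is the functor $\mathcal F\mapsto\mathcal F^{an}$ on sheaves. I would then prove, in order: (a) $\varepsilon^*$ is an equivalence on the abelian categories of constructible sheaves of $\Lambda$-modules; (b) the induced functor is an equivalence $D^b_c(X_{\acute{e}t},\Lambda)\xrightarrow{\simeq}D^b_c(X^{an},\Lambda)$; (c) this equivalence is compatible with the six operations.

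For (a): full faithfulness of $\varepsilon^*$ on constructible sheaves is immediate from the identity $\mathrm{Hom}(\mathcal F,\mathcal G)=\Gamma\big(X,\mathcal{H}om(\mathcal F,\mathcal G)\big)$, the fact that $\mathcal{H}om$ of constructible sheaves is constructible and commutes with $\varepsilon^*$, and the degree-zero case of Artin's comparison isomorphism $H^q(X_{\acute{e}t},\mathcal F)\xrightarrow{\simeq}H^q\big(X^{an},\mathcal F^{an}\big)$ for finite coefficients. Essential surjectivity is a Noetherian induction along an algebraic stratification of $X$: the base case --- locally constant constructible sheaves --- is the generalized Riemann existence theorem, i.e.\ the identification $\pi_1^{\et}(X)=\widehat{\pi_1(X^{an})}$ together with the observation that, $\Lambda$ being finite, a local system of $\Lambda$-modules is a representation factoring through a finite quotient; the inductive step then glues the pieces over an open--closed decomposition $(j\colon U\hookrightarrow X,\ i\colon Z\hookrightarrow X)$ via the recollement data, which transports across $\varepsilon^*$ by the full faithfulness just established.

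For (b): $\varepsilon^*$ is exact, hence passes to $D^b_c$ and is $t$-exact for the standard $t$-structures, and it is an equivalence on the hearts by (a). It then suffices to check that $\varepsilon^*$ induces isomorphisms on all $\mathrm{Ext}$-groups; writing $\mathrm{Ext}^n_{X_{\acute{e}t}}(\mathcal F,\mathcal G)\cong\mathbb{H}^n\big(X,R\mathcal{H}om(\mathcal F,\mathcal G)\big)$ and using that $R\mathcal{H}om$ of bounded constructible complexes is bounded constructible and commutes with $\varepsilon^*$, this is the general Artin comparison theorem for hypercohomology $\mathbb{H}^n(X_{\acute{e}t},K)\xrightarrow{\simeq}\mathbb{H}^n(X^{an},K^{an})$. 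A standard d\'evissage on the amplitude of a complex (via the truncation triangles and the five lemma) then shows that a $t$-exact functor which is an equivalence on hearts and an isomorphism on all $\mathrm{Ext}$'s is an equivalence of the bounded derived categories.

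For (c): $\varepsilon^*$, being a pullback along a morphism of topoi, is symmetric monoidal, hence commutes with $-\otimes^{L}-$, and by the $\mathrm{Ext}$-comparison of (b) it then commutes with $R\mathcal{H}om$ on $D^b_c$; it commutes with $f^*$ by functoriality of analytification. The substantive case is $Rf_!$: compactifying a finite-type morphism as $f=\bar f\circ j$ with $\bar f$ proper reduces us to proper morphisms and open immersions, and for a proper $\bar f$ the isomorphism $R\bar f_*(\mathcal F^{an})\cong(R\bar f_*\mathcal F)^{an}$ is Artin's proper comparison theorem (itself reduced, via proper base change, to the case of curves and Riemann existence, together with resolution of singularities). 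Compatibility with $Rf_*$ follows by combining the two reductions, and compatibility with $f^!$ follows from the adjunction $(Rf_!,f^!)$, which $\varepsilon^*$ respects. The only genuinely geometric input is thus Artin's relative comparison theorem for $Rf_*$ with finite coefficients --- which is where I expect the real work to sit, and which we may take from \cite[\S 6]{MR751966} (or from \cite{SGA4}, Exp.\ XI and XVI); the remaining steps above are formal d\'evissage, so it is legitimate simply to invoke \emph{loc.\,cit.} for the statement as given.
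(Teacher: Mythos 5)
Your proposal is correct and follows essentially the same route as the paper, which offers no independent argument but simply quotes the statement from \cite[Section 6]{MR751966} (resting on Artin's comparison theorems in \cite{SGA4}); your sketch is a faithful outline of that standard proof and ends by invoking the same sources. The only point worth noting is that the inductive gluing step for essential surjectivity tacitly needs the degree-zero comparison for $i^*j_*$ of constructible sheaves, not merely the full faithfulness already established, but this is covered by the relative comparison theorem you cite.
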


The equivalence in \ref{BBD result} is compatible with six-functor formalism and it induces an equivalence which is compatible with six functor formalism.
There is another identification of the category $D^b_c(X_{\acute{e}t},\Lambda)$. Recall that compact objects in a triangulated category are those objects $K$ for which $Hom(K,-)$ commutes with arbitrary direct sums. Following proposition gives that identification.

\begin{proposition}\label{Compact objects}
    Let $X$ be a smooth algebraic variety over $\mathbb{C}$ and $\Lambda$ be a finite ring. Then $D^b_c(X_{\acute{e}t},\Lambda)$ is exactly the full subcategory generated by compact objects in $D(X_{\acute{e}t},\Lambda)$.
\end{proposition}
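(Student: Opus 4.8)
\emph{Proof proposal.} The plan is to realize $D^b_c(X_{\acute{e}t},\Lambda)$ as the subcategory of compact objects of $D(X_{\acute{e}t},\Lambda)$ in two moves: first produce a set of compact generators, then identify the thick subcategory they generate with $D^b_c(X_{\acute{e}t},\Lambda)$. For the generators I would take the objects $j_!\Lambda_U$, where $j\colon U\to X$ runs over \'etale separated morphisms of finite type (one may restrict to $U$ affine). Each $j_!\Lambda_U$ is compact: by adjunction $\mathrm{Hom}_{D(X_{\acute{e}t},\Lambda)}(j_!\Lambda_U,K)\cong R\Gamma(U_{\acute{e}t},K|_U)$, and since $X$ is of finite type over $\mathbb{C}$ and $\Lambda$ is finite with torsion prime to the residue characteristics, $U$ has finite \'etale $\Lambda$-cohomological dimension (Artin vanishing on the affine $U$); hence $R\Gamma(U_{\acute{e}t},-)$ has finite amplitude and commutes with arbitrary coproducts, which is exactly compactness. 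The $j_!\Lambda_U$ and their shifts generate $D(X_{\acute{e}t},\Lambda)$ by the standard fact that a complex of \'etale $\Lambda$-sheaves with vanishing hypercohomology over every \'etale $U\to X$ is acyclic: for a single constructible sheaf this just says a sheaf with no sections over any \'etale open vanishes, and the general (unbounded) case follows by truncation and the same finite cohomological dimension.

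By Neeman's theorem, in a compactly generated triangulated category the compact objects are precisely the thick (triangulated, idempotent-complete) subcategory generated by any set of compact generators, so $D(X_{\acute{e}t},\Lambda)^c=\mathrm{thick}\langle j_!\Lambda_U\rangle$. It remains to match this with $D^b_c(X_{\acute{e}t},\Lambda)$. The inclusion $\subseteq$ is immediate, since each $j_!\Lambda_U$ is a single constructible sheaf in degree $0$ and $D^b_c(X_{\acute{e}t},\Lambda)$ is a thick, in particular idempotent-complete, subcategory of $D(X_{\acute{e}t},\Lambda)$ (idempotent-completeness being available from \cite{MR751966}, or from the equivalence with $DM_{h,lc}(X,\Lambda)$ of \cite{cisinski2016etale}). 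For the reverse inclusion I would run a d\'evissage: by the canonical truncation triangles any object of $D^b_c(X_{\acute{e}t},\Lambda)$ lies in the thick subcategory generated by its finitely many constructible cohomology sheaves; each such sheaf, by Noetherian induction on a stratification of $X$, lies in the thick subcategory generated by sheaves $i_!\mathcal L$ with $i\colon V\hookrightarrow X$ locally closed and $\mathcal L$ locally constant constructible on $V$; and each $i_!\mathcal L$ is finally reached from the generators by passing to an \'etale cover $\pi\colon V'\to V$ trivializing $\mathcal L$, using $(i\circ\pi)_!=i_!\circ\pi_!$, and pushing/pulling to reduce $\mathcal L$ to the constant sheaf $\Lambda$.

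The main obstacle, and the only substantive part, is this last reduction of $i_!\mathcal L$ to the standard generators. One must track how a locally closed immersion decomposes into its open and closed parts, and — more delicately — reduce a locally constant constructible sheaf to constant ones: this is clean when the trivializing cover has degree invertible in $\Lambda$ (then a trace argument makes $\mathcal L$ a direct summand of a pushforward of a constant sheaf), but in general it needs the finiteness theorems built into the six-functor formalism (\cite{cisinski2016etale}) rather than a naive resolution, and it is here that the finiteness hypothesis on $\Lambda$ is used in an essential way — this is also the reason such a statement is often phrased with the finite-Tor-dimension category $D^b_{ctf}(X_{\acute{e}t},\Lambda)$, whose objects have perfect stalks over $\Lambda$. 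A shorter alternative route is to combine the equivalence $D^b_{ctf}(X_{\acute{e}t},\Lambda)\simeq DM_{h,lc}(X,\Lambda)$ of \cite{cisinski2016etale} with the identification of locally constructible $h$-motives over such $X$ with the compact objects of $DM_h(X,\Lambda)$ and the comparison $DM_h(X,\Lambda)\simeq D(X_{\acute{e}t},\Lambda)$ valid for torsion coefficients.
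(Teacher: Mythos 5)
Your first two steps (the objects $j_!\Lambda_U$ are compact generators of $D(X_{\acute{e}t},\Lambda)$ by Artin vanishing plus finite cohomological dimension, and Neeman's theorem identifying the compacts with the thick subcategory they generate) are correct and are essentially the argument behind the reference the paper gives: the paper's own ``proof'' is a one-line citation of \cite[Proposition 6.4.8]{bhatt2013pro}, so your route is more self-contained than the paper's. The problem is the last step, and it is not a repairable technicality of the d\'evissage: the identification of $\mathrm{thick}\langle j_!\Lambda_U\rangle$ with $D^b_c(X_{\acute{e}t},\Lambda)$ is false for a general finite ring $\Lambda$. Each generator $j_!\Lambda_U$ has free stalks, and finite Tor-dimension is stable under shifts, cones and direct summands, so everything in $\mathrm{thick}\langle j_!\Lambda_U\rangle$ lies in $D^b_{ctf}(X_{\acute{e}t},\Lambda)$. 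Hence your argument can only ever identify the compact objects with $D^b_{ctf}$, not with $D^b_c$. Concretely, take $\Lambda=\mathbb{Z}/9$ (allowed by the hypotheses: finite, characteristic prime to $2$ and to the residue characteristic): the constant sheaf $\mathbb{Z}/3$ is constructible, so lies in $D^b_c(X_{\acute{e}t},\Lambda)$, but its stalks have infinite Tor-dimension over $\mathbb{Z}/9$, so it is not in the thick subcategory of the generators and is not compact. Your reverse-inclusion d\'evissage therefore must break, and it breaks exactly at the step you flagged as ``the only substantive part'': no trace argument, and no appeal to the finiteness theorems of the six-functor formalism, can build an infinite-Tor-dimension sheaf out of the $j_!\Lambda_U$.

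So the gap is not that you lack a cleverer reduction of $i_!\mathcal{L}$ to the generators; it is that the statement you are proving needs $D^b_{ctf}(X_{\acute{e}t},\Lambda)$ in place of $D^b_c(X_{\acute{e}t},\Lambda)$ (this is the form proved in \cite{bhatt2013pro}, and it is the category the paper actually uses in the sentence following the proposition); the two categories coincide only when every finitely generated $\Lambda$-module has finite Tor-dimension, e.g.\ $\Lambda=\mathbb{Z}/\ell$ or any finite ring of finite global dimension. With that correction (or that extra hypothesis on $\Lambda$) your outline does close up by the standard perfect-complex argument: every constructible sheaf admits a surjection from a finite direct sum of $j_!\Lambda_U$'s, one iterates to get a resolution by such sums, and finite Tor-dimension lets one truncate the resolution at a flat constructible kernel and conclude by a brutal-truncation/retract argument that the object lies in $\mathrm{thick}\langle j_!\Lambda_U\rangle$. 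Your alternative route through $DM_{h,lc}(X,\Lambda)\simeq D^b_{ctf}(X_{\acute{e}t},\Lambda)$ and compactness in $DM_h$ has the same feature: it proves the $D^b_{ctf}$ statement, not the $D^b_c$ one.
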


\begin{proof}
See \cite[Proposition 6.4.8]{bhatt2013pro} for the proof.    
\end{proof}

Observe that the full subcategory generated by compact objects in $D(X_{\acute{e}t},\Lambda)$ is same as the category $D^b_{ctf}(X_{\acute{e}t},\Lambda)$ (see \cite[Section 4]{deligne1977seminaire}). Therefore Theorem \ref{BBD result} and Proposition \ref{Compact objects}  provide the following equivalence of derived categories: 
\begin{equation}\label{algebraic and topological}
D^b_{ctf}(X_{\acute{e}t},\Lambda) \xrightarrow{\simeq} D^b_c(X^{an},\Lambda)
\end{equation}
which is compatible with the six-functor formalism.

\begin{theorem}\label{Top-Alg}
Let $X$ be a smooth algebraic variety over $\mathbb{C}$ and $\Lambda$ be a finite ring with characteristic not equal to $2$. Then equivalence in $\mathrm{(\ref{algebraic and topological})}$ is an equivalence of triangulated categories with duality
$$(D^b_{ctf}(X_{\acute{e}t},\Lambda), R\mathcal{H}om(-,\Lambda) \xrightarrow{\simeq} (D^b_c(X^{an},\Lambda),R\mathcal{H}om(-,\Lambda)).$$
Thus, for a finite ring $\Lambda$ of characteristic not equal to $2$, we have an isomorphism
\[W^i_c(X_{\acute{e}t}, \Lambda)\xrightarrow{\ \sim \ } W^i_c(X^{an}, \Lambda)\]
of the constructible Witt theory of the scheme $X$ with the constructible Witt theory of the topological space $X^{an}$ defined in 
\cite{woolf2008witt} - here we are using the notation  
$W^i_c(X^{an}, \Lambda)$ instead of $W^c_i(X^{an}, \Lambda)$ used in \cite{woolf2008witt}.
\end{theorem}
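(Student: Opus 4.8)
The plan is to upgrade the equivalence $\varepsilon^*$ of $\mathrm{(\ref{algebraic and topological})}$ to an equivalence of triangulated categories with duality and then invoke Balmer's invariance of Witt groups under such equivalences. First I would recall why the two pairs in the statement are triangulated categories with duality at all. On the algebraic side, since $X$ is smooth over $\mathbb{C}$, absolute purity gives $f^!\Lambda\simeq \Lambda(d)[2d]$ for the structure map $f:X\to\spec\mathbb{C}$ of relative dimension $d$, so $f^!\Lambda$ is $\otimes$-invertible in $D^b_{ctf}(X_{\acute{e}t},\Lambda)$; by Theorem \ref{6-functor formalism for locally constructible h-motives} the functor $\mathbb{D}_X=R\mathcal{H}om(-,f^!\Lambda)$ is a duality, and twisting it by the invertible object $(f^!\Lambda)^{-1}$ shows that $R\mathcal{H}om(-,\Lambda)$ is again a duality on $D^b_{ctf}(X_{\acute{e}t},\Lambda)$, the canonical bidual transformation being the corresponding transport of that of $\mathbb{D}_X$. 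The same reasoning applies on the topological side: $X^{an}$ is a complex manifold, hence canonically oriented, so its dualizing complex is $\Lambda[2\dim_{\mathbb{C}}X]$, again $\otimes$-invertible, and $R\mathcal{H}om(-,\Lambda)$ is a duality on $D^b_c(X^{an},\Lambda)$ differing from Verdier duality only by an even shift; this is, up to the indexing convention noted in the statement and in \cite{woolf2008witt}, the duality defining $W^i_c(X^{an},\Lambda)$. In particular both dualities are $\delta$-exact with $\delta=+1$.

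Next I would extract from Theorem \ref{BBD result} the data making $\varepsilon^*$ duality-preserving. The equivalence $\varepsilon^*\colon D^b_{ctf}(X_{\acute{e}t},\Lambda)\xrightarrow{\simeq}D^b_c(X^{an},\Lambda)$ is triangulated and, being compatible with the six-functor formalism, is a closed symmetric monoidal functor; it therefore comes with a natural isomorphism $\varphi_{F,G}\colon\varepsilon^*R\mathcal{H}om(F,G)\xrightarrow{\ \sim\ }R\mathcal{H}om(\varepsilon^*F,\varepsilon^*G)$. Since the constant sheaf $\Lambda$ on $X^{an}$ is $\varepsilon^*$ of the constant sheaf $\Lambda$ on $X_{\acute{e}t}$ (constant sheaves are pulled back from the punctual topos, and $\varepsilon$ lies over it), setting $G=\Lambda$ yields a natural isomorphism $\varphi_F\colon\varepsilon^*R\mathcal{H}om(F,\Lambda)\xrightarrow{\sim}R\mathcal{H}om(\varepsilon^*F,\Lambda)$, i.e.\ a natural isomorphism $\varepsilon^*\circ D_{X_{\acute{e}t}}\simeq D_{X^{an}}\circ\varepsilon^*$ of the two duality functors. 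The remaining point is to check that $\varphi$ intertwines the canonical bidual isomorphisms $\omega^{\acute{e}t}$ and $\omega^{an}$; this is a diagram chase forced by $\varepsilon^*$ being closed monoidal, so that $\varphi$ and the monoidal structure isomorphism of $\varepsilon^*$ are coherent with the evaluation maps $R\mathcal{H}om(F,\Lambda)\otimes F\to\Lambda$ out of which the bidual maps are built. Granting this, $(\varepsilon^*,\varphi)$ is a duality-preserving functor of triangulated categories with duality that is an equivalence of the underlying triangulated categories.

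Finally, by Balmer's functoriality recalled in Section \ref{preliminaries} (see \cite{balmer2001triangular}), a duality-preserving functor that is an equivalence of the underlying triangulated categories induces isomorphisms on all shifted Witt groups; applying this to $(\varepsilon^*,\varphi)$ and unwinding the definitions of the two constructible Witt theories gives $W^i_c(X_{\acute{e}t},\Lambda)\xrightarrow{\sim}W^i_c(X^{an},\Lambda)$ for all $i$, the right-hand side being the topological constructible Witt theory of \cite{woolf2008witt}. The genuinely hard input --- the equivalence of constructible derived categories together with its compatibility with the six operations --- is imported from \cite{MR751966} via Theorem \ref{BBD result}; the step requiring the most care above is the purely formal but slightly delicate verification that $\varphi$ is compatible with the double-dual transformations, equivalently that the comparison functor $\varepsilon^*$ is closed monoidal in a manner coherent with evaluation, not merely monoidal.
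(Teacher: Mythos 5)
Your plan is correct and follows essentially the same route as the paper: the paper's proof simply cites the compatibility of the comparison equivalence $\varepsilon^*$ of Theorem \ref{BBD result} and Proposition \ref{Compact objects} with the six-functor formalism and concludes that it is an equivalence of triangulated categories with duality, which is exactly what you spell out in detail (closed monoidal structure, $\varepsilon^*\Lambda\simeq\Lambda$, compatibility of bidual transformations, then Balmer functoriality). The only simplification available is that you do not need absolute purity to see that $R\mathcal{H}om(-,\Lambda)$ is a duality on $D^b_{ctf}(X_{\acute{e}t},\Lambda)$: since $X$ is smooth, hence regular, separated and of finite type over $B=\spec\mathbb{C}$, the ``in particular'' clause of Theorem \ref{6-functor formalism for locally constructible h-motives} applied with $S=X$ and $f=\mathrm{id}$ gives this directly.
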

\begin{proof} The theorem follows by the result that 
the equivalence in (\ref{algebraic and topological}) is compatible with the six-functor formalism and is an equivalence between triangulated categories with dualities. See Theorem \ref{BBD result} and Proposition \ref{Compact objects}.
\end{proof}

\section{Witt-valued signatures for real and complex projective varieties}
\label{signature}
In \cite{schurmann2020witt} Sch\"{u}rmann and Woolf have provided signature-type invariants for topological spaces taking values in Witt theories related to the coefficient ring.
For projective real and complex algebraic varieties now we can describe algebraic analogs of signature considered by Sch\"{u}rmann and Woolf via the constructible Witt theory developed in this paper. 
 
\subsection{Signatures for projective real and complex algebraic varieties}

For a Noetherian ring $\Lambda$ of positive characteristic such that $2$ is invertible in $\Lambda$,  the Theorem \ref{Main equivalence of derived categories} provides an equivalence 
$$ D^{b}_{ctf}((\spec \mathbb{R})_{\acute{e}t}, \Lambda) \xrightarrow{\simeq} D^{b}(Proj(\Lambda [\mathbb{Z}/2\mathbb{Z}]))$$
of the derived category of bounded complexes sheaves of $\Lambda$-modules having Tor-finite dimension and constructible cohomology sheaves with the bounded derived category of finitely generated projective $\Lambda[\mathbb{Z}/2\mathbb{Z}]$-modules. 
We also have an equivalence of triangulated categories with duality
\[D^{b}_{ctf}((\spec \mathbb{C})_{\acute{e}t}, \Lambda) \xrightarrow{\simeq} D^{b}(Proj(\Lambda)) \]
In Theorem \ref{algebraic analog of signature} these equivalences allow us to identify the constructible Witt theory of $\spec \mathbb{R}$ with a  $\mathbb{Z}/2\mathbb{Z}$-equivariant Witt theory of finitely generated projective $\Lambda$-modules; and, the constructible Witt theory of 
$\spec \mathbb{C}$ with the Witt theory of finitely generated projective $\Lambda$-modules. The theorem below gives the algebraic analog of signature defined in \cite{woolf2008witt}.

\begin{theorem}\label{algebraic analog of signature} 
For a ring $R$ denote the  Witt groups of finitely generated projective $R$-modules by $W^i_{lf}(R)$.
$(a.)$ Let $X$ be a projective real algebraic variety with the structure morphism $f:X\rightarrow \spec \mathbb{R}$ and $\Lambda$ a ring of finite characteristic not equal to $2$. Then we have the induced proper pushforward for constructible Witt theory 
\[W^{i}(f_*): W^i_c(X_{\acute{e}t}, \Lambda)\rightarrow W^i_c((\spec \mathbb{R})_{\acute{e}t}, \Lambda)=W^i_{lf}(\Lambda[\mathbb{Z}/2\mathbb{Z}]).\] 
(b.) Let $X$ be a projective complex algebraic variety with the structure map $f:X\rightarrow \spec \mathbb{C}$, and a finite commutative ring $\Lambda$ of characteristic not equal to $2$. Then we have the induced pushforward 
\[W^{i}(f_*): W^i_c(X_{\acute{e}t}, \Lambda)
%=W^i_c(X^{an}, \Lambda)
\rightarrow W^i_c((\spec \mathbb{C})_{\acute{e}t}, \Lambda)=W^i_{lf}(\Lambda)\] 
for constructible Witt theory of $X$.
\end{theorem}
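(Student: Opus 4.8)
The plan is to obtain both pushforwards as instances of the functoriality of constructible Witt groups under proper pushforward recorded in Proposition~\ref{derived category pullbacks and pushforwards} and Section~\ref{functoriality of dbctf}, combined with the identification of the constructible Witt theory of $\spec\mathbb{R}$ (resp.\ $\spec\mathbb{C}$) with a Witt theory of finitely generated projective modules from Theorem~\ref{Main equivalence of derived categories} (resp.\ its analogue over $\mathbb{C}$). The only geometric input is that a projective variety over a field is proper, separated and of finite type, and that a field is an admissible base for the six-functor package recalled in Theorem~\ref{6-functor formalism for locally constructible h-motives}.

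For part $(a.)$ I would take $B=S=\spec\mathbb{R}$. Being a field, $\spec\mathbb{R}$ is excellent, noetherian and of Krull dimension $0\le 2$, the identity $S\to B$ is a regular separated finite-type morphism, and since $\mathbb{R}$ has characteristic zero the hypothesis on $\Lambda$ relative to $B$ imposes no extra condition. The structure morphism $f\colon X\to\spec\mathbb{R}$ of a projective variety factors as a closed immersion $X\hookrightarrow\mathbb{P}^n_{\mathbb{R}}$ followed by the structure projection, hence is proper, separated and of finite type, so Proposition~\ref{derived category pullbacks and pushforwards} applies with $Y=\spec\mathbb{R}$, $g=\mathrm{id}$ and $T=\Lambda$ the constant sheaf — which lies in $D^b_{ctf}((\spec\mathbb{R})_{\acute{e}t},\Lambda)$, being flat and locally constant with finitely generated stalks, and is $\otimes$-invertible as the unit object. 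This produces a morphism of triangulated categories with duality
\[
f_*\colon \bigl(D^b_{ctf}(X_{\acute{e}t},\Lambda),\, f^!\Lambda\bigr)\longrightarrow \bigl(D^b_{ctf}((\spec\mathbb{R})_{\acute{e}t},\Lambda),\, R\mathcal{H}om(-,\Lambda)\bigr),
\]
using $\mathrm{id}^!\Lambda=\Lambda$ on the target, hence homomorphisms $W^i(f_*)\colon W^i_c(X_{\acute{e}t},\Lambda)\to W^i_c((\spec\mathbb{R})_{\acute{e}t},\Lambda)$ for all $i$. Theorem~\ref{Main equivalence of derived categories} identifies the target with $W^i_{lf}(\Lambda[\mathbb{Z}/2\mathbb{Z}])$, which gives the stated map.

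Part $(b.)$ is proved identically with $\spec\mathbb{C}$ in place of $\spec\mathbb{R}$; here $\Lambda$ is finite, hence noetherian of positive characteristic, and $\mathbb{C}$ again has characteristic zero. The only difference is the description of the target: since $\mathbb{C}$ is separably closed, $Gal(\mathbb{C}/\mathbb{C})$ is trivial, the equivalence of Theorem~\ref{Most useful corollary} reduces $Shv((\spec\mathbb{C})_{\acute{e}t},\Lambda)$ to $Mod(\Lambda)$, and the argument of Theorem~\ref{Main equivalence of derived categories} carries over verbatim to give an equivalence of triangulated categories with duality $(D^b_{ctf}((\spec\mathbb{C})_{\acute{e}t},\Lambda),R\mathcal{H}om(-,\Lambda))\simeq(D^b(Proj(\Lambda)),\mathrm{Hom}_\Lambda(-,\Lambda))$, so $W^i_c((\spec\mathbb{C})_{\acute{e}t},\Lambda)=W^i_{lf}(\Lambda)$. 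Since all the needed machinery is already in place there is no serious obstacle here; the only points requiring attention are that the constant sheaf $\Lambda$ is a legitimate $\otimes$-invertible dualizing object — so that the duality $D_X(\Lambda)=R\mathcal{H}om(-,f^!\Lambda)$ for which $f_*$ is duality-preserving is exactly the one implicit in the notation $W^i_c(X_{\acute{e}t},\Lambda)$, per the convention fixed after Theorem~\ref{6-functor formalism for locally constructible h-motives} — and the bookkeeping that $f^!\Lambda$ is in general the relative dualizing complex of $X$ rather than the constant sheaf, while on the base $\mathrm{id}^!\Lambda=\Lambda$ recovers the duality used in Theorem~\ref{Main equivalence of derived categories}.
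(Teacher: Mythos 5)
Your proposal is correct and takes essentially the same route as the paper, whose proof simply cites Proposition \ref{derived category pullbacks and pushforwards} together with Theorem \ref{Main equivalence of derived categories} (and the complex analogue $D^b_{ctf}((\spec \mathbb{C})_{\acute{e}t},\Lambda)\simeq D^b(Proj(\Lambda))$ stated just before the theorem); your verification of the hypotheses — $B=S$ the base field, $T=\Lambda$ the unit object, properness of the structure morphism, and $\mathrm{id}^!\Lambda=\Lambda$ so the target duality is $R\mathcal{H}om(-,\Lambda)$ — just makes explicit what the paper leaves implicit. The only cosmetic difference is that the paper also invokes Theorem \ref{Top-Alg}, whereas you obtain the identification $W^i_c((\spec \mathbb{C})_{\acute{e}t},\Lambda)=W^i_{lf}(\Lambda)$ directly by rerunning the argument of Theorem \ref{Main equivalence of derived categories} with trivial Galois group, which is an equally valid (and arguably cleaner) way to supply that ingredient.
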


\begin{proof}
The proof follows from Proposition \ref{derived category pullbacks and pushforwards}, and Theorems \ref{Main equivalence of derived categories} and \ref{Top-Alg}.
\end{proof}

\subsection{Final Remarks} The study of constructible Witt groups in \cite{woolf2008witt} and \cite{schurmann2020witt} is inspired by cobordism theory of Witt spaces by Siegel in \cite{Siegel1983WITTSA}. For a smooth complex projective variety $X$, the analytic space $X^{an}$ is a compact Witt space and it is proved in \cite[Corollary 5.12]{woolf2008witt} that the natural maps
\[ \Omega_i^{Witt}(X^{an}) \rightarrow W^i_c(X^{an}, \mathbb{Q})   \]
are isomorphisms for $i> \text{dim}X$: Here $\Omega_i^{Witt}(X^{an})$ denotes the Witt bordism group of $i$-dimensional Witt spaces over $X^{an}$. With this identification in mind, the most interesting question for us is the need of the corresponding algebraic theory of bordism of Witt spaces in algebraic geometry
and understanding the analog of the above isomorphism algebraically, and in the case of complex algebraic varieties - its compatibility with the isomorphism in Theorem \ref{Top-Alg}. Such a theory will also hopefully provide interesting interpretation of the signature in Theorem \ref{algebraic analog of signature} in case of real algebraic varieties.
Authors are exploring these based on the work by Levine and Pandharipande in \cite{LevinePandharipande2009} on geometrically constructed algebraic cobordism.

\bibliography{references}

\begin{thebibliography}{DGIV77}

\bibitem[AGV71]{SGA4}
Michael Artin, Alexander Grothendieck, and Jean-Louis Verdier.
\newblock {\em Theorie de Topos et Cohomologie Etale des Schemas {I}, {II}, {III}}, volume 269, 270, 305 of {\em Lecture Notes in Mathematics}.
\newblock Springer, 1971.

\bibitem[Bal99]{balmer1999derived}
Paul Balmer.
\newblock Derived witt groups of a scheme.
\newblock {\em Journal of pure and applied algebra}, 141(2):101--129, 1999.

\bibitem[Bal00]{balmer2000triangular}
Paul Balmer.
\newblock Triangular \text{Witt} groups part i: The 12-term localization exact sequence.
\newblock {\em K-theory}, 19(4):311--364, 2000.

\bibitem[Bal01]{balmer2001triangular}
Paul Balmer.
\newblock Triangular \text{Witt} groups part ii: From usual to derived.
\newblock {\em Mathematische Zeitschrift}, 236:351--382, 2001.

\bibitem[BBD82]{MR751966}
A.~A. Beilinson, J.~Bernstein, and P.~Deligne.
\newblock Faisceaux pervers.
\newblock In {\em Analysis and topology on singular spaces, {I} ({L}uminy, 1981)}, volume 100 of {\em Ast\'erisque}, pages 5--171. Soc. Math. France, Paris, 1982.

\bibitem[BS13]{bhatt2013pro}
Bhargav Bhatt and Peter Scholze.
\newblock The pro-{\'e}tale topology for schemes.
\newblock {\em arXiv preprint arXiv:1309.1198}, 2013.

\bibitem[CD16]{cisinski2016etale}
Denis-Charles Cisinski and Fr{\'e}d{\'e}ric D{\'e}glise.
\newblock {\'E}tale motives.
\newblock {\em Compositio Mathematica}, 152(3):556--666, 2016.

\bibitem[DGIV77]{deligne1977seminaire}
P~Deligne, A~Grothendieck, L~Illusie, and JL~Verdier.
\newblock S{\'e}minaire de g{\'e}om{\'e}trie alg{\'e}brique du bois marie-cohomologie {\'e}tale-(sga 4 1/2).
\newblock {\em Lecture Notes in Mathematics}, 1977.

\bibitem[Fu11]{fu2011etale}
Lei Fu.
\newblock {\em Etale cohomology theory}, volume~13.
\newblock World Scientific, 2011.

\bibitem[ILO14]{illusie2014travaux}
Luc Illusie, Yves Laszlo, and Fabrice Orgogozo.
\newblock Travaux de gabber sur l'uniformisation locale et la cohomologie {\'e}tale des sch{\'e}mas quasi-excellents. s{\'e}minaire {\`a} l'{\'e}cole polytechnique 2006--2008.
\newblock {\em HAL}, 2014, 2014.

\bibitem[ML09]{LevinePandharipande2009}
R.~Pandharipande M.~Levine.
\newblock Algebraic cobordism revisited.
\newblock {\em Invent. math.}, 219(176), 2009.

\bibitem[Sie83]{Siegel1983WITTSA}
Paul~H. Siegel.
\newblock Witt spaces: A geometric cycle theory for $ko$-homology at odd primes.
\newblock {\em American Journal of Mathematics}, 105:1067, 1983.

\bibitem[SW20]{schurmann2020witt}
J{\"o}rg Sch{\"u}rmann and Jonathan Woolf.
\newblock Witt groups of abelian categories and perverse sheaves.
\newblock {\em Annals of K-Theory}, 4(4):621--670, 2020.

\bibitem[Tam12]{tamme2012introduction}
G{\"u}nter Tamme.
\newblock {\em Introduction to {\'e}tale cohomology}.
\newblock Springer Science \& Business Media, 2012.

\bibitem[Wit37]{witt1937theorie}
Ernst Witt.
\newblock Theorie der quadratischen formen in beliebigen k{\"o}rpern.
\newblock {\em Journal f\"ur die reine und angewandte}, 1937.

\bibitem[Woo08]{woolf2008witt}
Jon Woolf.
\newblock Witt groups of sheaves on topological spaces.
\newblock {\em Commentarii Mathematici Helvetici}, 83(2):289--326, 2008.

\end{thebibliography}

\end{document}